
\documentclass[preprint]{elsarticle}
\usepackage{amsfonts}
\usepackage{amsmath}
\usepackage{amssymb}

\setcounter{MaxMatrixCols}{10}

\newtheorem{theorem}{Theorem}

\newtheorem{corollary}[theorem]{Corollary}

\newtheorem{definition}[theorem]{Definition}

\newtheorem{lemma}[theorem]{Lemma}

\newtheorem{proposition}[theorem]{Proposition}
\newtheorem{remark}[theorem]{Remark}

\newenvironment{proof}[1][Proof]{\noindent \textbf{#1.} }{\  \rule{0.5em}{0.5em}}

\begin{document}

\begin{frontmatter}
	
	\title{Asymptotic properties for a general class of Sz\'asz-Mirakjan-Durrmeyer operators
	}

	\author[1]{Ulrich Abel} 
	\author[2]{Ana Maria Acu}
	\author[3]{Margareta Heilmann}
	\author[4]{Ioan Ra\c sa}

	\vspace{10cm}
	

	\address[1] {Fachbereich Mathematik, Naturwissenschaften und Datenverarbeitung,
		Technische Hochschule Mittelhessen, Germany, 
		e-mail: ulrich.abel@mnd.thm.de }
	\address[2]{Department of Mathematics and Informatics, Lucian Blaga University of Sibiu,  Romania ({\it Corresponding Author}), e-mail: anamaria.acu@ulbsibiu.ro}
	\address[3]{School of Mathematics and Natural Sciences, University of Wuppertal,  Germany, e-mail: heilmann@math.uni-wuppertal.de}
	\address[4]{Technical University of Cluj-Napoca, Faculty of Automation and Computer Science, Department of Mathematics, Str. Memorandumului nr. 28, 400114 Cluj-Napoca, Romania
		e-mail:  ioan.rasa@math.utcluj.ro }

	\begin{abstract} 	
		{
		In this paper we introduce a general family of Szz\'asz--Mirakjan--Durrmeyer type operators depending on an integer parameter $j \in \mathbb{Z}$. 
		They can be viewed as a generalization of the Sz\'asz--Mirakjan--Durrmeyer operators \cite{MaTo1985}, Phillips operators \cite{Ph1954} and corresponding Kantorovich modifications of higher order. 

For $j\in {\mathbb{N}}$, these operators possess the exceptional property to preserve constants and the monomial $x^{j}$. It turns out, that an extension of this family covers certain well-known operators studied before, so that the outcoming results could be unified.

We present the complete asymptotic expansion for the sequence of these operators. All its coefficients are given in a concise form. In order to prove the expansions for the class of locally integrable functions of exponential growth on the positive half-axis, we derive a localization result which is interesting in itself. 
		} 
	\end{abstract}
	
	\begin{keyword} 
		
		\MSC[2020]  41A36, 41A60.
	\end{keyword}
	
\end{frontmatter}

\section{Introduction}

For fixed $j \in \mathbb{Z}$ we consider sequences of positive linear
operators $S_{n,j}$ which can be viewed as a generalization of the
Sz\'asz-Mirakjan-Durrmeyer operators \cite{MaTo1985}, Phillips operators 
\cite{Ph1954} and corresponding Kantorovich modifications of higher order.

The operators $S_{n,j}$, $j\in {\mathbb{Z}}$, approximate locally integrable
functions of exponential growth on the positive half-axis in each point of
continuity. For $j\in {\mathbb{N}}$, these operators possess the exceptional
property to preserve constants and the monomial $x^{j}$.

The idea of this paper was to construct a family of operators of
Sz\'asz--Mirakjan--Durrmeyer type preserving constants and $x^j$, $j\in {%
\mathbb{N}}$, similar to the well-known Phillips operators for $j=1$.

It turns out, that an extension of this family covers certain well-known
operators studied before, so that the outcoming results could be unified. We
will investigate pointwise approximation for continuous functions satisfying
an exponential growth condition. Furthermore, we derive asymptotic
expansions for the sequence $\left( \left( S_{n,j}f\right) \left( x\right)
\right) _{n\geq 1}$ even for simultaneous approximation. All its
coefficients are given in a concise form. In order to prove the expansions
for the class of locally integrable functions of exponential growth on the
positive half-axis, we prove a localization result which is interesting in itself.

For $A\geq 0$, we denote by $E_{A}$ the space of functions $f:[0,\infty
)\longrightarrow \mathbb{R}$, $f$ locally integrable, satisfying the growth
condition $\left\vert f\left( t\right) \right\vert \leq Ke^{At}$, $t\geq 0$
for some positive constant $K$. Furthermore, define $E:=\displaystyle%
\bigcup_{A\geq 0}E_{A}$.

Throughout the paper we use the convention that a sum is zero if the upper
limit is smaller than the lower one.

\begin{definition}
Let $f\in E_{A}$, $n>A$ and $j\in {\mathbb{Z}}$. Then the operators $S_{n,j}$
are defined by 
\begin{equation*}
\left( S_{n,j}f\right) \left( x\right)
=f(0)\sum_{k=0}^{j-1}s_{n,k}(x)+\sum_{k=j}^{\infty
}s_{n,k}(x)n\int_{0}^{\infty }s_{n,k-j}(t)f(t)dt
\end{equation*}%
where 
\begin{equation*}
s_{n,k}(x)=\dfrac{(nx)^{k}}{k!}e^{-nx},\,k\geq 0.
\end{equation*}%
For the sake of simplicity we define $s_{n,k}(x)=0$, if $k<0$.
\end{definition}

For $j\leq 0$ we have 
\begin{equation*}
\left( S_{n,j}f\right) \left( x\right) =\sum_{k=0}^{\infty
}s_{n,k}(x)n\int_{0}^{\infty }s_{n,k-j}(t)f(t)dt.
\end{equation*}%
For the special case $j=0$ we get the Sz\'{a}sz-Mirakjan-Durrmeyer operators
first defined in \cite{MaTo1985}, for $j=1$ the Phillips operators, also
called genuine Sz\'{a}sz-Mirakjan-Durrmeyer operators, \cite{Ph1954}. For $%
j\leq -1$ the operators coincide with certain auxiliary operators, see,
e.g., \cite[(1.4)]{HeiMue1989} for $c=0$, named $M_{n,r}$ there, $r=-j$ or 
\cite[(3.5)]{He1992} for $c=0$, named $M_{n,s}$ there, $s=-j$. They can also
be considered as corresponding Kantorovich modifications of higher order. 

For $\ell =0,1,\ldots $ we denote $e_{\ell }(t)=t^{\ell }$, $t\geq 0$.

All these operators preserve $e_{0}$ and for $j\geq 1$ also $e_{j}$ since 
\begin{eqnarray*}
\left( S_{n,j}e_{j}\right) \left( x\right)  &=&\sum_{k=j}^{\infty }s_{n,k}(x)%
{n\int_{0}^{\infty }s_{n,k-j}(t)t^{j}dt} \\
&=&x^{j}\sum_{k=j}^{\infty }s_{n,k-j}(x)=x^{j}.
\end{eqnarray*}%
Furthermore, for $j\geq 1$, it is easy to see that $\left( S_{n,j}f\right)
\left( 0\right) =f(0)$. 

In the case $j\leq 0$ the operators $S_{n,j}$ approximate at $x=0$, if the
function $f\in E_{A}$ is continuous (from the right) at $x=0$. This can be
seen as follows. Given $\varepsilon >0$, let $\left\vert {f\left( t\right)
-f\left( 0\right) }\right\vert <\varepsilon $, for $0\leq t<\delta $. Define 
$\tilde{f}\left( t\right) =0$, for $0\leq t<\delta $, and $\tilde{f}\left(
t\right) =f\left( t\right) -f\left( 0\right) $, for $t\geq \delta $.\ Then 
\begin{eqnarray*}
\left\vert \left( S_{n,j}f\right) \left( 0\right) -f\left( 0\right)
\right\vert  &\leq &n\int_{0}^{\infty }s_{n,-j}(t)\left\vert f\left(
t\right) -f\left( 0\right) \right\vert dt \\
&< &n\varepsilon \int_{0}^{\delta }s_{n,-j}(t) dt + n\int_{\delta
}^{\infty }s_{n,-j}(t)\left\vert \tilde{f}\left( t\right) \right\vert dt \\
&< &\varepsilon +\left( {S_{n,j}}\vert \tilde{f}\vert \right)
\left( 0\right) .
\end{eqnarray*}%
Since $\vert \tilde{f}\vert \in E$ the localization theorem
(Theorem~\ref{theorem-localization}) implies that ${\left( {S_{n,j}}%
\vert \tilde{f}\vert \right) \left( 0\right) \rightarrow 0}$ as $%
n\rightarrow \infty $.

The following lemma shows that $S_{n,j}$ is a mapping from $E$ to $E$. For $%
A\geq 0$, let $\exp _{A}$ denote the exponential function $\exp _{A}\left(
t\right) =e^{At}$.

\begin{lemma}
\label{lem.1} Let $n>2A$, $A\geq 0$, $f\in E_A$. Then $S_{n,j}f\in E_{2A}$.
\end{lemma}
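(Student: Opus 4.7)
My approach is to exploit the positivity of $S_{n,j}$ to reduce the lemma to an explicit computation of $(S_{n,j}\exp_A)(x)$. Since $|f(t)| \leq Ke^{At} = K\exp_A(t)$ for every $t \geq 0$ (in particular $|f(0)| \leq K = K\exp_A(0)$), and the kernels $s_{n,k}$ are non-negative, the triangle inequality applied both to the outer series and to the Lebesgue integral in the definition of $S_{n,j}f$ yields
\[
\left|(S_{n,j}f)(x)\right| \leq K \cdot (S_{n,j}\exp_A)(x).
\]
Hence it will suffice to show that $S_{n,j}\exp_A$ lies in $E_{2A}$.

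The right-hand side admits a closed form. First, a direct evaluation of the inner integral as a Gamma integral (which converges thanks to $n > A$) gives, for every $m \geq 0$,
\[
n\int_0^\infty s_{n,m}(t)\, e^{At}\, dt = \left(\frac{n}{n-A}\right)^{m+1}.
\]
Setting $q := n/(n-A) > 1$, the exponential generating function of the Poisson weights, $\sum_{k=0}^\infty s_{n,k}(x)\, q^k = \exp\bigl(nAx/(n-A)\bigr)$, then collapses the outer sum, yielding
\[
(S_{n,j}\exp_A)(x) \leq \sum_{k=0}^{j-1} s_{n,k}(x) + q^{1-j} \exp\!\left(\frac{nAx}{n-A}\right),
\]
with the first sum empty when $j \leq 0$. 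The interchange of summation and integration needed to assemble this formula is justified by Tonelli's theorem, since the integrand is non-negative.

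To conclude, I would invoke the hypothesis $n > 2A$, which is equivalent to $nA/(n-A) < 2A$; consequently $\exp(nAx/(n-A)) \leq e^{2Ax}$ for every $x \geq 0$. Combined with $\sum_{k=0}^{j-1} s_{n,k}(x) \leq 1 \leq e^{2Ax}$, this produces the bound $(S_{n,j}\exp_A)(x) \leq (1 + q^{1-j})\, e^{2Ax}$, whence $S_{n,j}f \in E_{2A}$. The argument is essentially a computation with no serious obstacle; the one point worth highlighting is that it is precisely the hypothesis $n > 2A$ (rather than the weaker $n > A$ needed merely for the inner integral to converge) that forces the exponent $nA/(n-A)$ below $2A$, and thus places the image in $E_{2A}$ rather than in a strictly larger growth class.
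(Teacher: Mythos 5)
Your proposal is correct and follows essentially the same route as the paper: reduce to $(S_{n,j}\exp_A)(x)$ by positivity, evaluate the inner integral as a Gamma integral to get $\left(\tfrac{n}{n-A}\right)^{k-j+1}$, collapse the outer sum via the generating function of the Poisson weights, and use $n>2A$ to push the exponent $\tfrac{nA}{n-A}x$ below $2Ax$. The paper merely splits into the cases $j\le 0$ and $j\ge 1$ instead of writing the unified bound $1+q^{1-j}e^{nAx/(n-A)}$, which is exactly the estimate recorded in the remark following its proof.
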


\begin{proof}
Note that $n> 2A$ implies $\dfrac{n}{n-A}< 2$ and $\dfrac{n-A}{n}\leq1$.

Let $j\leq 0$. Therefore, 
\begin{eqnarray}
\left( S_{n,j}\exp _{A}\right) \left( x\right) &=&\sum_{k=0}^{\infty
}s_{n,k}(x)n\int_{0}^{\infty }s_{n,k-j}(t)e^{At}dt  \notag \\
&=&\sum_{k=0}^{\infty }s_{n,k}(x)\left( \frac{n}{n-A}\right)
^{k-j}n\int_{0}^{\infty }s_{n-A,k-j}(t)dt  \notag \\
&=&\left( \frac{n}{n-A}\right) ^{-j+1}\sum_{k=0}^{\infty }s_{n,k}(x)\left( 
\frac{n}{n-A}\right) ^{k}  \notag \\
&=&\left( \frac{n}{n-A}\right) ^{-j+1}e^{Ax\frac{n}{n-A}}\sum_{k=0}^{\infty
}s_{\frac{n^{2}}{n-A},k}(x)  \notag \\
&=&\left( \frac{n}{n-A}\right) ^{-j+1}e^{Ax\frac{n}{n-A}}\leq
2^{-j+1}e^{2Ax}.  \notag
\end{eqnarray}

For $j\geq 1$ we use $\dfrac{n-A}{n}\leq 1$. Thus 
\begin{align}
\left( S_{n,j}\exp _{A}\right) \left( x\right) & =\displaystyle%
\sum_{k=0}^{j-1}s_{nk}(x)+n\displaystyle\sum_{k=j}^{\infty }s_{nk}(x)\left( 
\dfrac{n}{n-A}\right) ^{k-j}\int_{0}^{\infty }s_{n-A,k-j}(t)dt  \notag \\
& \leq 1+\left( \dfrac{n-A}{n}\right) ^{j-1}e^{Ax\frac{n}{n-A}%
}\sum_{k=j}^{\infty }s_{\frac{n^{2}}{n-A},k}(x)  \notag \\
& \leq 1+e^{2Ax}.  \notag
\end{align}%
This completes the proof.
\end{proof}

\begin{remark}
The proof reveals that 
\begin{equation*}
\left( S_{n,j}\exp _{A}\right) \left( x\right) \leq 1+\left( \dfrac{n-A}{n}%
\right) ^{j-1}e^{\frac{An}{n-A}x}\text{ \qquad }\left( n>A\right) ,
\end{equation*}%
for each integer $j$.
\end{remark}


In order to facilitate the formulation of our results we use a suited
differential operator $\mathcal{D}_{j}^{2}$. For $j\in {\mathbb{Z}}$ define 
\begin{equation}
\mathcal{D}_{j}^{2}=\left( 1-j\right) D+e_{1}D^{2},  \label{eq.X2}
\end{equation}%
where $D$ denotes the ordinary differentiation operator. We mention that $%
\mathcal{D}_{j}^{2}e_{0}=\mathcal{D}_{j}^{2}e_{j}=0$. More generally, we
define 
\begin{equation}
\mathcal{D}_{j}^{2k}=e_{j}D^{k}\left( e_{k-j}D^{k}\right) ,\,k\in {\mathbb{N}%
}.  \label{eq.X3}
\end{equation}%
For $k=1$ this is in agreement with (\ref{eq.X2}). Moreover, define $%
D_{j}^{0}$ to be the identity operator. It can be shown that $\mathcal{D}%
_{j}^{2k}$ can be written as $k$-th iterate of $\mathcal{D}_{j}^{2}$, i.e., 
\begin{equation}
\mathcal{D}_{j}^{2k}=(\mathcal{D}_{j}^{2})^{k}.  \label{eq.X1}
\end{equation}

\section{Moments and central moments of the operators}

For the asymptotic relations in Section~\ref{section-Asymptotic expansions}
we will need central moments. First we consider the images of monomials. For 
$j\leq 0$ we refer to \cite[Satz 4.2, (4.4)]{He1992}.

\begin{proposition}
\label{prop-moments}The moments of the operators $S_{n,j}$ are given by 
\begin{equation*}
S_{n,j}e_{0}=e_{0},
\end{equation*}
and for $r\in \mathbb{N}$, by 
\begin{equation}  \label{Snjj-moments-general}
\left( S_{n,j}e_{r}\right) \left( x\right) = \sum_{k=0}^{r}\dfrac{1}{n^k}{%
\binom{r}{k}}(r-j)^{\underline{k}}x^{r-k} -n^{-r}e^{-nx}\sum_{k=0}^{j-1-r}%
\frac{\left( nx\right) ^{k}}{k!}\left( k+r-j\right) ^{\underline{r}}.
\end{equation}
\end{proposition}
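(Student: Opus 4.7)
The plan is to evaluate $(S_{n,j}e_r)(x)$ directly from the definition via a Gamma integral, Vandermonde's convolution, and the standard factorial-moment identity for the Sz\'asz basis.

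Since $e_r(0) = 0$ for $r \geq 1$, the term $f(0)\sum_{k=0}^{j-1}s_{n,k}(x)$ in the definition disappears, and both the $j \leq 0$ and $j \geq 1$ cases reduce to
\[
(S_{n,j}e_r)(x) = \sum_{k=\max(j,0)}^{\infty} s_{n,k}(x)\, n\!\int_0^\infty s_{n,k-j}(t)\, t^r\, dt.
\]
A direct Gamma-integral computation gives $n\int_0^\infty s_{n,m}(t)\,t^r\,dt = (m+r)!/(m!\,n^r) = n^{-r}(m+r)^{\underline{r}}$, so substituting $m=k-j$ yields
\[
(S_{n,j}e_r)(x) = n^{-r}\sum_{k=\max(j,0)}^{\infty} s_{n,k}(x)\,(k+r-j)^{\underline{r}}.
\]

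Next I would expand $(k+r-j)^{\underline r}$ in the basis of falling factorials via Vandermonde's convolution,
\[
(k+r-j)^{\underline{r}} = \sum_{p=0}^{r}\binom{r}{p}\, k^{\underline{p}}\,(r-j)^{\underline{r-p}},
\]
and invoke the elementary identity $\sum_{k=0}^{\infty} s_{n,k}(x)\,k^{\underline{p}} = (nx)^p$ (immediate upon shifting the summation index in the series for $\exp$). If the outer sum ran over all $k \geq 0$, this would produce, after reindexing $q = r-p$ and using $\binom{r}{p} = \binom{r}{q}$, precisely the first sum in the statement,
\[
\sum_{q=0}^{r} n^{-q}\binom{r}{q}(r-j)^{\underline{q}}\,x^{r-q}.
\]
For $j \leq 0$ the outer sum really does start at $k = 0$, so this gives the whole answer, consistent with the correction sum being empty ($j - 1 - r < 0$, hence vacuous by the paper's convention).

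For $j \geq 1$ the outer sum starts at $k = j$, so I would subtract off $n^{-r}\sum_{k=0}^{j-1} s_{n,k}(x)\,(k+r-j)^{\underline r}$. The crucial observation is that $(k+r-j)^{\underline r} = \prod_{\ell=1}^{r}(k-j+\ell)$ vanishes precisely when $k \in \{j-r,\, j-r+1,\, \ldots,\, j-1\}$, so in the correction sum only indices $0 \leq k \leq j-1-r$ survive; these produce exactly the second sum in the statement (which is empty whenever $j \leq r$). The only delicate point of the argument is this truncation of the correction via the hidden zeros of $(k+r-j)^{\underline r}$; the rest is formal bookkeeping, so I do not expect any genuine obstacle.
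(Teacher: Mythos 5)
Your proposal is correct and follows essentially the same route as the paper's proof: the Gamma-integral evaluation $n\int_0^\infty s_{n,m}(t)t^r\,dt = n^{-r}(m+r)^{\underline{r}}$, the Vandermonde expansion of $(k+r-j)^{\underline{r}}$ combined with the factorial moments of the Sz\'asz basis, and the truncation of the correction sum from $k\le j-1$ to $k\le j-1-r$ via the vanishing of the falling factorial on $j-r\le k\le j-1$. The only cosmetic difference is that the paper writes the full-range sum as $e^{nx}\sum_i\binom{r}{i}(r-j)^{\underline{r-i}}(nx)^i$ rather than invoking $\sum_k s_{n,k}(x)k^{\underline{p}}=(nx)^p$ directly, which is the same computation.
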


\begin{remark}
\begin{itemize}
\item[1)] The second sum in (\ref{Snjj-moments-general}) disappears if $%
r\geq j$. Consequently, it holds 
\begin{equation}
\left( S_{n,j}e_{r}\right) \left( x\right) =\sum_{k=0}^{r}\frac{1}{n^{k}}{%
\binom{r}{k}}(r-j)^{\underline{k}}x^{r-k}\text{ \qquad }\left(
r=j,j+1,\ldots \right) .  \label{Snjj-moments-exakt}
\end{equation}

\item[2)] Using the definition of $\mathcal{D}_{j}^{2k}$ in (\ref{eq.X3}) we
can rewrite for each $j$ the moments as 
\begin{equation*}
\left( S_{n,j}e_{r}\right) \left( x\right) =\displaystyle\sum_{k=0}^{r}%
\dfrac{1}{n^{k}}\left( \mathcal{D}_{j}^{2k}e_{r}\right)
(x)-n^{-r}e^{-nx}\sum_{k=0}^{j-1-r}\dfrac{(nx)^{k}}{k!}(k+r-j)^{\underline{r}%
}.
\end{equation*}
\end{itemize}
\end{remark}

\begin{remark}
The case $r=j$ in (\ref{Snjj-moments-exakt}) shows again the preservation of
the monomial $e_{j}$.
\end{remark}

\begin{remark}
The moments of the operators $S_{n,j}$ satisfy, for $r=0,1,2,\ldots $, the
asymptotic relation 
\begin{equation}
\left( S_{n,j}e_{r}\right) \left( x\right) = \sum_{k=0}^{r}\frac{1}{n^{k}k!}
({\mathcal{D}}^{2k}_j e_r) (x) +O\left( n^{j-1-2r}e^{-nx}\right) \text{
\qquad }\left( n\rightarrow \infty \right),  \label{Snjj-moments-asymptotic}
\end{equation}
which appears to be an identity without Landau-O in the case $r\geq j$.
\end{remark}

\begin{proof}[Proof of Proposition~\protect\ref{prop-moments}]
We already know that $S_{n,j}e_{0}=e_{0}$. For $r\in \mathbb{N}$, we have 
\begin{eqnarray*}
\left( S_{n,j}e_{r}\right) \left( x\right) &=& \sum_{k=j}^{\infty}s_{n,k}(x)
n\int_{0}^{\infty }s_{n,k-j}\left( t\right)t^{r}dt \\
&=& n^{-r}e^{-nx}\sum_{k=j}^{\infty }\frac{\left( nx\right) ^{k}}{k!}
\left(k-j+r\right)^{\underline{r}}=n^{-r}e^{-nx}\left( T_{1}+T_{2}\right) ,
\end{eqnarray*}
say, where 
\begin{equation*}
T_{2}:=\sum_{k=0}^{j-1}\frac{\left( nx\right) ^{k}}{k!}\left( k-j+r\right)
^{ \underline{r}}=\sum_{k=0}^{j-1-r}\frac{\left( nx\right) ^{k}}{k!}\left(
k-j+r\right) ^{\underline{r}},
\end{equation*}
since $\left( k-j+r\right) ^{\underline{r}}=0$ if $j-r\leq k\leq j-1$, and 
\begin{equation*}
T_{1}:=\sum_{k=0}^{\infty }\frac{\left( nx\right) ^{k}}{k!}\left(
k-j+r\right) ^{\underline{r}}.
\end{equation*}
By Vandermonde convolution, 
\begin{equation*}
\left( k-j+r\right) ^{\underline{r}}=\sum_{i=0}^{r}\binom{r}{i}k^{\underline{
i}}\left( r-j\right) ^{\underline{r-i}}
\end{equation*}
and we obtain 
\begin{equation*}
T_{1}=\sum_{i=0}^{r}\binom{r}{i}\left( r-j\right) ^{\underline{r-i}
}\sum_{k=i}^{\infty }\frac{\left( nx\right) ^{k}}{k!}k^{\underline{i}
}=e^{nx}\sum_{i=0}^{r}\binom{r}{i}\left( r-j\right) ^{\underline{r-i}}\left(
nx\right) ^{i}.
\end{equation*}
\end{proof}

Now we consider the central moments. For $x\in \mathbb{R}$ we use the
notation $\psi _{x}(t)=t-x$.

\begin{proposition}
\label{prop-central-moments}The central moments of the operators $S_{n,j}$
are given by 
\begin{equation*}
S_{n,j}\psi _{x}^{0}=e_{0},
\end{equation*}
and for $s\in \mathbb{N}$, by 
\begin{eqnarray*}
\left( S_{n,j}\psi _{x}^{s}\right) \left( x\right)
&=&\sum_{k=0}^{\left\lfloor s/2\right\rfloor }\frac{x^{k}}{k!n^{s-k}}s^{%
\underline{2k}}\left( s-k-j\right) ^{\underline{s-2k}} \\
&&-e^{-nx}\sum_{r=1}^{s}\binom{s}{r}\left( -x\right)
^{s-r}n^{-r}\sum_{k=0}^{j-1-r}\frac{\left( nx\right) ^{k}}{k!}\left(
k+r-j\right) ^{\underline{r}}.
\end{eqnarray*}
\end{proposition}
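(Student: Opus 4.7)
The plan is to expand $\psi_{x}^{s}(t)=(t-x)^{s}$ by the binomial theorem, apply $S_{n,j}$ termwise, and substitute the explicit formula for $(S_{n,j}e_{r})(x)$ from Proposition~\ref{prop-moments}. Since $S_{n,j}e_{0}=e_{0}$ contributes the summand $(-x)^{s}$, while for $r\ge 1$ the formula \eqref{Snjj-moments-general} splits into a polynomial part and an exponential part, I would obtain immediately
\begin{equation*}
(S_{n,j}\psi_{x}^{s})(x) = P(x) - e^{-nx}\sum_{r=1}^{s}\binom{s}{r}(-x)^{s-r}n^{-r}\sum_{k=0}^{j-1-r}\frac{(nx)^{k}}{k!}(k+r-j)^{\underline{r}},
\end{equation*}
where the exponential term already matches the second sum claimed in the proposition, and
\begin{equation*}
P(x)=\sum_{r=0}^{s}\binom{s}{r}(-x)^{s-r}\sum_{k=0}^{r}\frac{1}{n^{k}}\binom{r}{k}(r-j)^{\underline{k}}x^{r-k}
\end{equation*}
(the contribution $(-x)^{s}$ from $r=0$ absorbs into the $r=0$, $k=0$ slot, since $(-j)^{\underline{0}}=1$). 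All real work concerns $P$.

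To simplify $P$, I would interchange the order of summation, write $r=k+m$, and use $\binom{s}{r}\binom{r}{k}=\binom{s}{k}\binom{s-k}{m}$ together with $(-x)^{s-k-m}x^{k+m}=(-1)^{s-k-m}x^{s}$. Reindexing $k\mapsto s-k$ then gives
\begin{equation*}
P(x)=\sum_{k=0}^{s}\frac{x^{k}}{n^{s-k}}\binom{s}{k}\sum_{m=0}^{k}(-1)^{k-m}\binom{k}{m}(s-k+m-j)^{\underline{s-k}}.
\end{equation*}
The inner sum is precisely the $k$-th forward difference $\Delta^{k}\bigl[A^{\underline{s-k}}\bigr]$ evaluated at $A=s-k-j$. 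Iterating the classical identity $\Delta[A^{\underline{N}}]=N\cdot A^{\underline{N-1}}$ yields
\begin{equation*}
\Delta^{k}\bigl[A^{\underline{s-k}}\bigr]\Big|_{A=s-k-j}=(s-k)^{\underline{k}}\,(s-k-j)^{\underline{s-2k}},
\end{equation*}
which vanishes as soon as $k>s-k$, i.e., $k>\lfloor s/2\rfloor$. Using the identity $\binom{s}{k}(s-k)^{\underline{k}}=s^{\underline{2k}}/k!$ then assembles $P$ into exactly the first sum claimed in the proposition.

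The main obstacle is recognising the inner alternating sum as an iterated forward difference of a falling factorial; once that identification is in place, both the upper limit $\lfloor s/2\rfloor$ and the coefficient $s^{\underline{2k}}(s-k-j)^{\underline{s-2k}}/(k!\,n^{s-k})$ drop out automatically. The remainder is index bookkeeping and a careful treatment of the $r=0$ boundary, where the formula of Proposition~\ref{prop-moments} is stated separately.
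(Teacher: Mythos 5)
Your proof is correct, and the overall skeleton coincides with the paper's: binomial expansion of $\psi_x^s$, substitution of Proposition~\ref{prop-moments}, and the observation that the exponential part already has the claimed form (with the $r=0$ term contributing only $(-x)^s$ to the polynomial part). Where you genuinely diverge is in the treatment of the polynomial part $P=M_{n,j,s,1}$: the paper does not compute it at all, but cites \cite[Korollar 4.4 (4.6)]{He1992} (proved there for $j\le 0$) and asserts that the proof for $j\ge 1$ is identical, whereas you supply a self-contained derivation via the interchange of summation and the identification of the inner alternating sum as the iterated forward difference $\Delta^{k}\bigl[A^{\underline{s-k}}\bigr]\big|_{A=s-k-j}=(s-k)^{\underline{k}}(s-k-j)^{\underline{s-2k}}$, which also explains cleanly why the sum truncates at $\lfloor s/2\rfloor$. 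Your version buys a proof that is independent of an external (and not easily accessible) reference and valid uniformly in $j\in\mathbb{Z}$; the paper's buys brevity. One harmless slip: in the intermediate step you write $(-x)^{s-k-m}x^{k+m}=(-1)^{s-k-m}x^{s}$, but with $r=k+m$ the monomial from the moment formula is $x^{r-k}=x^{m}$, so the product is $(-1)^{s-k-m}x^{s-k}$; your subsequent displayed formula for $P(x)$ already carries the correct power $x^{s-k}$ (reindexed to $x^{k}$), so nothing downstream is affected.
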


\begin{proof}
Obviously, we have $S_{n,j}\psi _{x}^{0}=S_{n,j}e_{0}=e_{0}$. For $s\in 
\mathbb{N}$, we have 
\begin{eqnarray*}
\left( S_{n,j}\psi _{x}^{s}\right) \left( x\right) &=&\sum_{r=0}^{s}\binom{s%
}{r}\left( -x\right) ^{s-r}\left( S_{n,j}e_{r}\right) \left( x\right) \\
&=&\sum_{r=0}^{s}\binom{s}{r}\left( -x\right) ^{s-r}\sum_{k=0}^{r}\binom{r}{k%
}\left( r-j\right) ^{\underline{k}}n^{-k}x^{r-k} \\
&&-e^{-nx}\sum_{r=1}^{s}\binom{s}{r}\left( -x\right)
^{s-r}n^{-r}\sum_{k=0}^{j-1-r}\frac{\left( nx\right) ^{k}}{k!}\left(
k+r-j\right) ^{\underline{r}} \\
=: &&M_{n,j,s,1}(x)+M_{n,j,s,2}(x).
\end{eqnarray*}%
In \cite[Korollar 4.4 (4.6)]{He1992} it is proved for $j\leq 0$ that 
\begin{equation*}
M_{n,j,s,1}(x)=\sum_{k=0}^{\left\lfloor s/2\right\rfloor }\frac{x^{k}}{%
n^{s-k}}\frac{s!}{k!(s-2k)!}\left( s-k-j\right) ^{\underline{s-2k}}.
\end{equation*}%
The proof for $j\geq 1$ is the same.
\end{proof}

\begin{remark}
Note that $\left( s-k-j\right) ^{\underline{s-2k}}=0$ if $0\leq s-k-j\leq
s-2k$, i.e., $k\leq s-j$ and $k\leq j$, so $k\leq \min \left\{ j,s-j\right\} 
$. \newline
If $j\leq s\leq 2j-1$ we have $\min \left\{ j,s-j\right\} =s-j$. Therefore,
we get 
\begin{equation*}
M_{n,j,s,1}(x)=\sum_{k=s-j+1}^{\left\lfloor s/2\right\rfloor }\frac{x^{k}}{%
k!n^{s-k}}s^{\underline{2k}}\left( s-k-j\right) ^{\underline{s-2k}}.
\end{equation*}%
In particular, if $s=2j-1$, we have 
\begin{equation*}
M_{n,j,s,1}(x)=\sum_{k=j}^{j-1}\frac{x^{k}}{k!n^{s-k}}s^{\underline{2k}%
}\left( s-k-j\right) ^{\underline{s-2k}}=0.
\end{equation*}%
Furthermore, if $s\geq 2j$, 
\begin{equation*}
M_{n,j,s,1}(x)=\sum_{k=j}^{\left\lfloor s/2\right\rfloor }\frac{x^{k}}{%
k!n^{s-k}}s^{\underline{2k}}\left( s-k-j\right) ^{\underline{s-2k}}.
\end{equation*}
\end{remark}

\begin{remark}

Obviously, $M_{n,j,s,2}\left( x\right) =0$ if $j\leq 1$. If $j\geq 2$,
observe that $\left( -1\right) ^{r}\left( k+r-j\right) ^{\underline{r}%
}=\left( -k-r+j\right) ^{\overline{r}}=\left( j-1-k\right) ^{\underline{r}%
}>0 $, for $k=0,\ldots ,j-1-r$. Therefore, we infer that 
\begin{equation*}
-\left( -1\right) ^{s}M_{n,j,s,2}\left( x\right) =e^{-nx}\sum_{r=1}^{s}%
\binom{s}{r}x^{s-r}n^{-r}\sum_{k=0}^{j-1-r}\frac{\left( nx\right) ^{k}}{k!}%
\left( j-1-k\right) ^{\underline{r}}\geq 0.
\end{equation*}%
We have the easy estimate 
\begin{equation*}
\left\vert M_{n,j,s,2}\left( x\right) \right\vert \leq \frac{K_{s,j}}{n^{s}},
\end{equation*}%
where%
\begin{equation*}
K_{s,j}:=\sup_{t\geq 0}e^{-t}\sum_{r=1}^{s}\binom{s}{r}t^{s-r}%
\sum_{k=0}^{j-1-r}\frac{\left( nx\right) ^{k}}{k!}\left( j-1-k\right) ^{%
\underline{r}}.
\end{equation*}%
Obviously, $K_{s,j}=0$ if $j\leq 1$.
\end{remark}

\begin{remark}
For $M_{n,j,s,2}$ we have the representation 
\begin{eqnarray*}
&&M_{n,j,s,2}\left( x\right) =-e^{-nx}\sum_{r=1}^{s}\binom{s}{r}\left(
-x\right) ^{s-r}n^{-r}\sum_{k=0}^{j-1-r}\frac{\left( nx\right) ^{k}}{k!}
\left( k+r-j\right) ^{\underline{r}} \\
&=&-e^{-nx}\left( -x\right) ^{s}\sum_{k=0}^{j-1}\frac{\left( nx\right) ^{k} 
}{k!}\sum_{r=1}^{s}\frac{1}{r!}\left( -s\right) ^{\overline{r}}\left(
k-j+1\right) ^{\overline{r}}\frac{1}{\left( nx\right) ^{r}} \\
&=&-e^{-nx}\left( -x\right) ^{s}\sum_{k=0}^{j-1}\frac{\left( nx\right) ^{k} 
}{k!}\left[ \text{ }_{2}F_{0}\left( k-j+1,-s;;\left( nx\right) ^{-1}\right)
-1\right].
\end{eqnarray*}
Using \cite[(13.1.10)]{AS} we get 
\begin{equation*}
_{2}F_{0}\left( k-j+1,-s;;\left( nx\right) ^{-1}\right) =\left( -nx\right)
^{k-j+1}U\left( k-j+1,k-j+2+s,-nx\right) ,
\end{equation*}
where $U\left( a,b,x\right) $ denotes the confluent hypergeometric function
which is the logarithmic solution of Kummer's equation.
\end{remark}

\section{Rate of convergence of the operators $S_{n,j}$}

Let $C_{b}\left[ 0,+\infty \right) $ be the space of all real-valued,
continuous and bounded functions defined on the interval $\left[ 0,+\infty
\right) $. The following theorem gives an estimate of the rate of
convergence of the operators $S_{n,j}$ in terms of the ordinary modulus of
continuity.

\begin{theorem}
\label{theorem-rate of convergence}Let $j\in \mathbb{Z}$ and $x>0$. For $%
f\in C_{b}\left[ 0,+\infty \right) $, the rate of convergence of the
operators $S_{n,j}$ can be estimated by 
\begin{equation}
\left\vert \left( S_{n,j}f\right) \left( x\right) -f\left( x\right)
\right\vert \leq \left( 1+\sqrt{2x+\frac{\left( j-1\right) \left( j-2\right) 
}{n}}\right) \omega \left( f,\frac{1}{\sqrt{n}}\right) .
\label{estimate-rate of convergence}
\end{equation}
\end{theorem}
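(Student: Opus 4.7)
The strategy is the classical Popoviciu/Shisha--Mond device for positive linear operators that reproduce constants. First I would start from the elementary modulus-of-continuity inequality
\[
|f(t)-f(x)|\leq \omega(f,\delta)\left(1+\frac{|t-x|}{\delta}\right)\qquad(t,x\geq 0,\ \delta>0),
\]
apply $S_{n,j}$ in the variable $t$ and exploit $S_{n,j}e_{0}=e_{0}$ together with the positivity of $S_{n,j}$ to obtain
\[
|(S_{n,j}f)(x)-f(x)|\leq \omega(f,\delta)\left(1+\frac{1}{\delta}\,(S_{n,j}|\psi_{x}|)(x)\right).
\]
A Cauchy--Schwarz inequality applied to the positive linear functional $g\mapsto (S_{n,j}g)(x)$ (valid because $S_{n,j}e_{0}=e_{0}$) then yields $(S_{n,j}|\psi_{x}|)(x)\leq \sqrt{(S_{n,j}\psi_{x}^{2})(x)}$.

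Next I would reduce the question to estimating the second central moment. Using Proposition~\ref{prop-central-moments} with $s=2$, the first piece is
\[
M_{n,j,2,1}(x)=\frac{2x}{n}+\frac{(1-j)(2-j)}{n^{2}}=\frac{2x}{n}+\frac{(j-1)(j-2)}{n^{2}},
\]
which already produces precisely the quantity under the square root in \eqref{estimate-rate of convergence}. The task then is to verify that the correction term $M_{n,j,2,2}(x)$ does not spoil this bound. For $j\leq 1$ this term is identically zero by definition, while for $j\geq 2$ the sign analysis in the remark preceding the theorem gives $-(-1)^{s}M_{n,j,s,2}(x)\geq 0$, so with $s=2$ we have $M_{n,j,2,2}(x)\leq 0$. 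Together these observations yield
\[
(S_{n,j}\psi_{x}^{2})(x)\leq \frac{2x}{n}+\frac{(j-1)(j-2)}{n^{2}}=\frac{1}{n}\left(2x+\frac{(j-1)(j-2)}{n}\right).
\]

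Finally I would choose $\delta=1/\sqrt{n}$ and combine the three ingredients, so that
\[
\frac{1}{\delta}\sqrt{(S_{n,j}\psi_{x}^{2})(x)}\leq \sqrt{n\cdot\frac{1}{n}\left(2x+\frac{(j-1)(j-2)}{n}\right)}=\sqrt{2x+\frac{(j-1)(j-2)}{n}},
\]
giving exactly \eqref{estimate-rate of convergence}. The proof is essentially routine once Proposition~\ref{prop-central-moments} is in hand; the only substantive point is the sign observation $M_{n,j,2,2}(x)\leq 0$, which is the main obstacle in the sense that without it one would only get the bound with an extra additive exponentially decaying term $|M_{n,j,2,2}(x)|$ instead of the clean closed form. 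Since this sign property is already recorded in the remark above, the argument goes through directly.
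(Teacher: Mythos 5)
Your proof is correct and follows essentially the same route as the paper: the classical Popoviciu/Shisha--Mond estimate in terms of the second central moment (which the paper simply quotes from Altomare--Campiti as Lemma~\ref{lemma-rate of convergence-Altomare-Campiti} rather than rederiving), combined with the bound $\left( S_{n,j}\psi _{x}^{2}\right) \left( x\right) \leq \frac{2x}{n}+\frac{(j-1)(j-2)}{n^{2}}$ and the choice $\delta =1/\sqrt{n}$. The only (harmless) difference is that you deduce $M_{n,j,2,2}(x)\leq 0$ from the sign observation in the remark on $M_{n,j,s,2}$, whereas the paper's Lemma~\ref{lemma-estimate-central-moment} verifies the same inequality by an explicit computation with the auxiliary function $h_{2}$; both arguments are valid.
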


We remark that Theorem~\ref{theorem-rate of convergence} implies that $%
S_{n,j}f$ approximates each continuous function $f\in E$. Given $x>0$, we
can choose a function $\widetilde{f}\in C_{b}\left[ 0,+\infty \right) $
coinciding with $f$ on a finite interval $\left[ 0,R\right) $ with $R>x$. By
the localization theorem (Theorem~\ref{theorem-localization}), $\left(
S_{n,j}\left( f-\widetilde{f}\right) \right) \left( x\right) $ decays
exponentially fast when $n$ tends to infinity. In fact, we have the
following corollary.

\begin{corollary}
Let $j\in \mathbb{Z}$ and $x>0$. For $f\in E\cap C\left[ 0,+\infty \right) $%
, it holds 
\begin{equation*}
\lim_{n\rightarrow \infty }\left( S_{n,j}f\right) \left( x\right) =f\left(
x\right) .
\end{equation*}
\end{corollary}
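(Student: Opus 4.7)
The plan is to argue exactly as the paragraph preceding the corollary already suggests: reduce the problem to a bounded continuous function by cutting off $f$ outside a neighborhood of $x$, then apply the rate-of-convergence theorem to the bounded piece and the localization theorem to the tail.

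More concretely, given $x>0$ I would first fix any $R>x$ and construct $\widetilde{f}\in C_{b}[0,+\infty)$ that agrees with $f$ on $[0,R]$; an explicit choice is
\begin{equation*}
\widetilde{f}(t) := f(t) \text{ for } 0\leq t\leq R,\qquad \widetilde{f}(t) := f(R) \text{ for } t\geq R,
\end{equation*}
which is continuous on $[0,+\infty)$ and bounded by $\max_{[0,R]}|f|$. Writing $f = \widetilde{f}+g$ with $g := f-\widetilde{f}$, note that $g\in E$ (because $f\in E$ and $\widetilde{f}$ is bounded, hence in $E_{0}$) and $g\equiv 0$ on $[0,R]$, so $g$ vanishes in a neighborhood of $x$.

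Next I would apply Theorem~\ref{theorem-rate of convergence} to $\widetilde{f}$: since $\widetilde{f}\in C_{b}[0,+\infty)$ and $\omega(\widetilde{f},1/\sqrt{n})\to 0$ as $n\to \infty$, while the factor $1+\sqrt{2x+(j-1)(j-2)/n}$ is bounded in $n$, estimate (\ref{estimate-rate of convergence}) yields
\begin{equation*}
\bigl(S_{n,j}\widetilde{f}\bigr)(x)\longrightarrow \widetilde{f}(x)=f(x)\qquad (n\to\infty).
\end{equation*}
Then I would invoke the localization theorem (Theorem~\ref{theorem-localization}) on $g$. Since $g\in E$ and vanishes on the neighborhood $[0,R]$ of $x$, the localization result gives $(S_{n,j}g)(x)\to 0$ (in fact exponentially fast in $n$).

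Finally, by linearity of $S_{n,j}$, $(S_{n,j}f)(x)=(S_{n,j}\widetilde{f})(x)+(S_{n,j}g)(x)\to f(x)+0=f(x)$, which is the desired conclusion. The only delicate point is making sure the hypotheses of the localization theorem are satisfied by $g$, namely membership in $E$ and vanishing near $x$; both are immediate from the construction, so there is really no substantive obstacle once Theorem~\ref{theorem-rate of convergence} and Theorem~\ref{theorem-localization} are in hand.
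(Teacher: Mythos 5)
Your argument is correct and is essentially the paper's own proof, which is only sketched in the paragraph preceding the corollary: cut $f$ off beyond $R>x$ to get $\widetilde{f}\in C_{b}[0,+\infty)$, apply Theorem~\ref{theorem-rate of convergence} to $\widetilde{f}$ and Theorem~\ref{theorem-localization} to $f-\widetilde{f}$. Your explicit choice of $\widetilde{f}$ (constant equal to $f(R)$ beyond $R$) is the right one, since it guarantees uniform continuity and hence $\omega(\widetilde{f},1/\sqrt{n})\to 0$, which a generic bounded continuous extension would not.
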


In order to show Theorem~\ref{theorem-rate of convergence} we can apply the
classical estimate in terms of the second central moment (see \cite[Theorem
5.1.2]{Altomare}).

\begin{lemma}
\label{lemma-rate of convergence-Altomare-Campiti}If $Le_{0}=e_{0}$, then,
for $f\in C_{b}\left[ 0,+\infty \right) $, 
\begin{equation*}
\left\vert \left( Lf\right) \left( x\right) -f\left( x\right) \right\vert
\leq \left( 1+\frac{1}{\delta }\sqrt{\left( L\psi _{x}^{2}\right) \left(
x\right) }\right) \omega \left( f,\delta \right).
\end{equation*}
\end{lemma}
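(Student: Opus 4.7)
The plan is to follow the classical Shisha--Mond argument, exploiting only positivity, linearity, the reproduction of constants, and the standard sub-additivity property of the modulus of continuity. Throughout I will assume the implicit hypothesis from the cited source \cite{Altomare} that $L$ is a positive linear operator; this is clearly the setting here since $L$ will be specialised to $S_{n,j}$.

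The first step is the well-known estimate, valid for every $t,x\geq 0$ and every $\delta>0$,
\begin{equation*}
\left\vert f(t)-f(x)\right\vert \leq \omega(f,|t-x|)\leq \left(1+\frac{|t-x|}{\delta}\right)\omega(f,\delta),
\end{equation*}
which follows from the monotonicity and sub-additivity of the modulus of continuity. Since $Le_{0}=e_{0}$ and $L$ is linear, one has the identity
\begin{equation*}
(Lf)(x)-f(x)=\bigl(L(f-f(x)e_{0})\bigr)(x),
\end{equation*}
so positivity of $L$ yields $\left\vert (Lf)(x)-f(x)\right\vert \leq \bigl(L\left\vert f-f(x)e_{0}\right\vert\bigr)(x)$. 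Inserting the modulus estimate, with $t$ playing the role of the integration variable and $\psi_{x}(t)=t-x$, and again using linearity together with $Le_{0}=e_{0}$, I obtain
\begin{equation*}
\left\vert (Lf)(x)-f(x)\right\vert \leq \omega(f,\delta)\left(1+\frac{1}{\delta}\bigl(L|\psi_{x}|\bigr)(x)\right).
\end{equation*}

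The second step is to replace the first central moment by the square root of the second, which is the whole point of the lemma. For a positive linear operator the Cauchy--Schwarz inequality $\bigl(L(uv)\bigr)^{2}\leq L(u^{2})\,L(v^{2})$ holds pointwise; applying it with $u=|\psi_{x}|$ and $v=e_{0}$ at the point $x$ and using $Le_{0}=e_{0}$ gives
\begin{equation*}
\bigl(L|\psi_{x}|\bigr)(x)\leq \sqrt{(L\psi_{x}^{2})(x)}\cdot \sqrt{(Le_{0})(x)}=\sqrt{(L\psi_{x}^{2})(x)}.
\end{equation*}
Combining the two displays produces the asserted inequality. The only non-routine ingredient is the Cauchy--Schwarz step for positive linear operators, which is standard but worth stating explicitly; no real obstacle arises, as the argument is purely algebraic once positivity and preservation of constants are in hand.
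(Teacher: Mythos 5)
Your proof is correct. The paper does not prove this lemma at all --- it simply cites \cite[Theorem 5.1.2]{Altomare} --- and your argument (the modulus estimate $|f(t)-f(x)|\leq(1+|t-x|/\delta)\,\omega(f,\delta)$, positivity plus $Le_{0}=e_{0}$, and the Cauchy--Schwarz inequality for positive linear operators to pass from $L|\psi_{x}|$ to $\sqrt{L\psi_{x}^{2}}$) is precisely the standard Shisha--Mond proof underlying that reference, with the implicit positivity hypothesis correctly identified.
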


\begin{lemma}
\label{lemma-estimate-central-moment}Let $j\geq 2$. For $x\geq 0$ and
positive integers $n$, the first and the second central moment of $S_{n,j}$
satisfy the estimates 
\begin{equation*}
\frac{1-j}{n}\leq \left( S_{n,j}\psi _{x}^{1}\right) \left( x\right) \leq 0
\end{equation*}%
and 
\begin{equation*}
0\leq \left( S_{n,j}\psi _{x}^{2}\right) \left( x\right) \leq \frac{2x}{n}+%
\frac{\left( j-1\right) \left( j-2\right) }{n^{2}},
\end{equation*}%
respectively. For $j\leq 1$, we have 
\begin{equation*}
0\leq \left( S_{n,j}\psi _{x}^{1}\right) \left( x\right) =\frac{1-j}{n},%
\text{ \qquad }0\leq \left( S_{n,j}\psi _{x}^{2}\right) \left( x\right) =%
\frac{2x}{n}+\frac{\left( j-1\right) \left( j-2\right) }{n^{2}}.
\end{equation*}
\end{lemma}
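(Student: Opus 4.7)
The plan is to specialize Proposition~\ref{prop-central-moments} to $s=1$ and $s=2$ and separately control the two pieces $M_{n,j,s,1}(x)$ and $M_{n,j,s,2}(x)$. The piece $M_{n,j,s,1}(x)$ is a small explicit sum; for $s=1$ it collapses to a single term $(1-j)/n$, and for $s=2$ the two-term sum evaluates to $2x/n + (j-1)(j-2)/n^2$. These identities are valid for every integer $j$ and should be written out first.

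For $j\le 1$, the remark after Proposition~\ref{prop-central-moments} observes that the inner sums defining $M_{n,j,s,2}$ are empty, so the central moments coincide with the $M_{n,j,s,1}$-values just computed. Since $1-j\ge 0$ and $(j-1)(j-2)\ge 0$ in this regime, both equalities and nonnegativity claims for $j\le 1$ follow immediately.

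For $j\ge 2$, apply the sign information in the remark: $-(-1)^{s}M_{n,j,s,2}(x)\ge 0$. Taking $s=1$ gives $M_{n,j,1,2}(x)\ge 0$, which combined with the computed $M_{n,j,1,1}(x)=(1-j)/n$ yields the lower bound $(S_{n,j}\psi_{x}^{1})(x)\ge (1-j)/n$. Taking $s=2$ gives $M_{n,j,2,2}(x)\le 0$, producing the upper bound $(S_{n,j}\psi_{x}^{2})(x)\le 2x/n+(j-1)(j-2)/n^{2}$. The matching lower bound $(S_{n,j}\psi_{x}^{2})(x)\ge 0$ is trivial because $S_{n,j}$ is positive and $\psi_{x}^{2}\ge 0$.

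The only place where signs alone do not suffice is the upper bound $(S_{n,j}\psi_{x}^{1})(x)\le 0$ for $j\ge 2$; this is the step I expect to be the main (though mild) obstacle. Here I would use the explicit formula
\[
M_{n,j,1,2}(x)=\frac{e^{-nx}}{n}\sum_{k=0}^{j-2}\frac{(nx)^{k}}{k!}(j-1-k),
\]
extracted from Proposition~\ref{prop-central-moments} at $s=1$, together with the elementary estimate $j-1-k\le j-1$ for $0\le k\le j-2$ and $\sum_{k=0}^{j-2}(nx)^{k}/k!\le e^{nx}$, to conclude $M_{n,j,1,2}(x)\le (j-1)/n$. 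Combined with $M_{n,j,1,1}(x)=(1-j)/n$, this gives $(S_{n,j}\psi_{x}^{1})(x)\le 0$ and completes the proof.
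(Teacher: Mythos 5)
Your proof is correct. It uses the same basic decomposition as the paper — Proposition~\ref{prop-central-moments} split into $M_{n,j,s,1}$ and $M_{n,j,s,2}$, with the $j\le 1$ case immediate and positivity of $S_{n,j}$ supplying $(S_{n,j}\psi_x^2)(x)\ge 0$ — but the handling of $M_{n,j,s,2}$ for $j\ge 2$ is genuinely lighter. Where you invoke the already-established sign fact $-(-1)^sM_{n,j,s,2}(x)\ge 0$ from the remark following Proposition~\ref{prop-central-moments} to get $M_{n,j,1,2}(x)\ge 0$ and $M_{n,j,2,2}(x)\le 0$, the paper instead recomputes: it introduces $h_1(nx)=n\,M_{n,j,1,2}(x)$ and proves $0\le h_1(t)\le h_1(0)=j-1$ by a monotonicity argument ($h_1'\le 0$), and it carries out a fairly involved explicit estimate of a function $h_2$ to show $M_{n,j,2,2}(x)\le 0$ — a computation your citation of the remark bypasses entirely. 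For the one bound the sign information does not give, namely $(S_{n,j}\psi_x^1)(x)\le 0$, your crude estimate $j-1-k\le j-1$ together with $\sum_{k=0}^{j-2}(nx)^k/k!\le e^{nx}$ yields $M_{n,j,1,2}(x)\le (j-1)/n$ just as effectively as the paper's derivative computation, and is arguably more transparent. The trade-off is that your argument depends on the remark having been proved beforehand (it is, in Section~2), whereas the paper's proof of the lemma is self-contained modulo the proposition itself.
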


\begin{proof}
The results for $j\leq 1$ follow immediately from Proposition \ref%
{prop-central-moments}. \newline
For $j\geq 2$, the first central moment is given by 
\begin{equation*}
\left( S_{n,j}\psi _{x}^{1}\right) \left( x\right) =\frac{1-j}{n}%
-e^{-nx}n^{-1}\sum_{k=0}^{j-2}\frac{\left( nx\right) ^{k}}{k!}\left(
k+1-j\right) =-\frac{j-1}{n}+\frac{1}{n}h_{1}\left( nx\right) ,
\end{equation*}%
where 
\begin{equation*}
h_{1}\left( t\right) =-e^{-t}\sum_{k=0}^{j-2}\frac{t^{k}}{k!}\left(
k+1-j\right) .
\end{equation*}%
Since $h_{1}\left( t\right) \geq 0$ and 
\begin{equation*}
h_{1}^{\prime }\left( t\right) =e^{-t}\sum_{k=0}^{j-2}\frac{t^{k}}{k!}\left(
k+1-j\right) -e^{-t}\sum_{k=0}^{j-2}\frac{t^{k}}{k!}\left( k+2-j\right)
=-e^{-t}\sum_{k=0}^{j-2}\frac{t^{k}}{k!}\leq 0
\end{equation*}%
we conclude that $0\leq h_{1}\left( t\right) \leq h_{1}\left( 0\right) =j-1$%
. This implies that $1-j\leq n\left( S_{n,j}\psi _{x}^{1}\right) \left(
x\right) \leq 0$. \newline
Now, let $j\geq 2$. The second central moment is given by 
\begin{equation*}
\left( S_{n,j}\psi _{x}^{2}\right) \left( x\right) =\frac{2x}{n}+\frac{%
\left( j-1\right) \left( j-2\right) }{n^{2}}+\frac{1}{n^{2}}h_{2}\left(
nx\right) ,
\end{equation*}%
where 
\begin{eqnarray*}
h_{2}\left( t\right) &=&e^{-t}\left[ 2t\sum_{k=0}^{j-2}\frac{t^{k}}{k!}%
\left( k+1-j\right) -\sum_{k=0}^{j-3}\frac{t^{k}}{k!}\left( k+2-j\right) ^{%
\underline{2}}\right] \\
&=&e^{-t}\sum_{k=0}^{j-1}\frac{t^{k}}{k!}\left[ 2k\left( k-j\right) -\left(
k+2-j\right) ^{\underline{2}}\right] \\
&=&e^{-t}\sum_{k=0}^{j-1}\frac{t^{k}}{k!}\left[ k(k-3)-\left( j-1\right)
\left( j-2\right) \right] .
\end{eqnarray*}%
Using the estimate 
\begin{equation*}
-2\leq k(k-3) \leq \left( j-4\right) \left( j-1\right) \text{ \qquad }\left(
k=1,\ldots ,j-1\right)
\end{equation*}%
we have 
\begin{eqnarray*}
\lefteqn{-\left( j-1\right) \left( j-2\right) e^{-t}+\left[ -2-\left(
j-1\right) \left( j-2\right) \right] e^{-t}\sum_{k=1}^{j-1}\frac{t^{k}}{k!}}
\\
&\leq& h_{2}\left( t\right) \leq -\left( j-1\right) e^{-t}\left(
j-2+2\sum_{k=1}^{j-1}\frac{t^{k}}{k!}\right) \leq 0.
\end{eqnarray*}%
Since $\left( S_{n,j}\psi _{x}^{2}\right) \left( x\right) \geq 0$, the
desired estimate of the second central moment follows.
\end{proof}

\begin{remark}
Note that $\left( S_{n,j}\psi _{x}^{2}\right) \left( x\right) =\dfrac{2x}{n}%
+O\left( n^{-2}\right) $ as $n\rightarrow \infty $.
\end{remark}

\begin{proof}[Proof of Theorem~\protect\ref{theorem-rate of convergence}]
By Lemma~\ref{lemma-rate of convergence-Altomare-Campiti} and Lemma~\ref%
{lemma-estimate-central-moment}, for $f\in C_{b}\left[ 0,+\infty \right) $
and $\delta >0$, 
\begin{equation*}
\left\vert \left( S_{n,j}f\right) \left( x\right) -f\left( x\right)
\right\vert \leq \left( 1+\frac{1}{\delta }\sqrt{\frac{2x}{n}+\frac{\left(
j-1\right) \left( j-2\right) }{n^{2}}}\right) \omega \left( f,\delta \right)
.
\end{equation*}%
Choosing $\delta =1/\sqrt{n}$ we get (\ref{estimate-rate of convergence}).
\end{proof}

\section{Localization result}

The purpose of this section is to estimate the rate of convergence of the
sequence of functions $S_{n,j}f$ at a point $x>0$ if $f\in E_{A}$ has the
specific property to vanish in a neighborhood of $x$. Though we need such an
estimate in deriving the asymptotic expansions for the sequence $\left(
S_{n,j}f\right) $, this so-called localization result is interesting in
itself.

\begin{theorem}
\label{theorem-localization}Let $f\in E$, $x\geq 0$ and $\delta >0$. If $%
f\left( t\right) =0$, for $t\in \left( x-\delta ,x+\delta \right) \cap \left[
0,\infty \right) $, then there is a positive constant $c$, such that $\left(
S_{n,j}f\right) \left( x\right) =O\left( e^{-cn}\right) $ as $n\rightarrow
\infty $.
\end{theorem}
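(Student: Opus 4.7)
The plan is to reduce the estimate $(S_{n,j}f)(x)=O(e^{-cn})$ to a pair of Chernoff-type large-deviation bounds, exploiting the fact that $s_{n,k}(x)$, viewed as a Poisson kernel in $k$, concentrates around $k=nx$, while $n\,s_{n,\ell}(t)$, viewed as a density in $t$ (a Gamma density with shape $\ell+1$ and rate $n$), concentrates around $t=\ell/n$. Since $f$ vanishes on $(x-\delta,x+\delta)\cap[0,\infty)$, only values of $t$ with $|t-x|\geq\delta$ enter the integral, and this separation forces either $k$ or $k-j$ to lie far from $nx$, which is what produces the exponential decay.

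First I would fix $A\geq 0$ and $K>0$ with $f\in E_A$ and $|f(t)|\leq Ke^{At}$. The boundary contribution $f(0)\sum_{k=0}^{j-1}s_{n,k}(x)$, present only when $j\geq 1$, vanishes if $0\in(x-\delta,x+\delta)$ (for then $f(0)=0$); otherwise it is bounded by $|f(0)|\cdot j\cdot\max_{k<j}s_{n,k}(x)=O(n^{j-1}e^{-nx})$, which is already exponentially small for $x\geq\delta>0$. It therefore suffices to estimate
\[
R_{n}:=\sum_{k\geq\max(j,0)}s_{n,k}(x)\,n\int_{\{t\geq 0,\,|t-x|\geq\delta\}}s_{n,k-j}(t)\,|f(t)|\,dt.
\]
Using $|f(t)|\leq Ke^{At}$ together with the identity $s_{n,\ell}(t)e^{At}=(n/(n-A))^{\ell}s_{n-A,\ell}(t)$, and writing $m:=n-A$, I would recast this as
\[
R_{n}\leq K(n/m)^{1-j}\sum_{k\geq\max(j,0)}s_{n,k}(x)\,(n/m)^{k}\,P\bigl(|Y_{k}-x|\geq\delta\bigr),
\]
where $Y_{k}$ has the Gamma distribution with shape $k-j+1$ and rate $m$ (density $m\,s_{m,k-j}$), so that $\mathbb{E}Y_{k}=(k-j+1)/m$.

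Next I would split the $k$-sum at $|k-mx|=m\delta/2$. For $k$ with $|k-mx|\leq m\delta/2$ the mean of $Y_{k}$ lies within $\delta/2+O(1/n)$ of $x$, so a standard Gamma large-deviation bound gives $P(|Y_{k}-x|\geq\delta)\leq e^{-c_{1}m}$ uniformly in $k$; since $(n/m)^{k}$ is bounded in this range (recall $(1+A/m)^{m}\to e^{A}$) and $\sum_{k}s_{n,k}(x)=1$, this range contributes $O(e^{-c_{1}n})$. For $k$ with $|k-mx|>m\delta/2$ I would bound $P(|Y_{k}-x|\geq\delta)\leq 1$ and apply a tilted Chernoff bound: the probability measure on $\mathbb{N}_{0}$ proportional to $s_{n,k}(x)(n/m)^{k}$ is Poisson with parameter $nxn/m=nx+O(1)$, while the constraint $|k-mx|>m\delta/2$ forces a deviation of order $n$ from this new mean, so the restricted tilted sum is again $O(e^{-c_{2}n})$.

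The main obstacle is precisely handling the exponential growth of $f$: the weight $e^{At}$ produces the inflating factor $(n/(n-A))^{k-j+1}$ in the outer sum, which a direct Chernoff bound on $s_{n,k}(x)$ alone cannot absorb. The exponential tilt described above is the key device—it reinterprets the weighted sum as a genuine Poisson probability with a slightly shifted mean (differing from $nx$ by $O(1)$), for which Chernoff still applies with a strictly positive rate, yielding the desired estimate $(S_{n,j}f)(x)=O(e^{-cn})$.
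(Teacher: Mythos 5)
Your proposal is correct and follows essentially the same route as the paper: the same split of the outer sum at $\left\vert k/n-x\right\vert \approx \delta /2$, with the far range controlled by a tail estimate for the exponentially tilted Poisson weights $s_{n,k}(x)\left( n/(n-A)\right) ^{k}$ (the paper's Lemma~\ref{lemma-estimate-szasz-tail}, proved via Hardy's inequality, is exactly your tilted Chernoff bound) and the near range controlled by a Gamma tail estimate for $n\,s_{n,k-j}$ on $\left\{ t:\left\vert t-x\right\vert \geq \delta \right\}$, which the paper derives by hand via Stirling's formula, unimodality of $e^{-t}t^{k}$ and the logarithm inequalities of Lemma~\ref{lemma-log-estimate}, and which you instead cite as a standard Chernoff bound. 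The only point deserving an extra word in a full write-up is the uniformity of that Gamma bound over shape parameters ranging from $O(1)$ to $O(n)$ (relevant when $x<\delta$, including $x=0$, which the paper treats separately), but this causes no real difficulty.
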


A similar result for a general family of operators can be found in \cite[%
Theorem~8]{Lopez-Latorre-JMAA-2011}. For locally smooth functions $f$, the
conclusion is the weaker relation $\left( S_{n,j}f\right) \left( x\right)
=O\left( n^{-q}\right) $ as $n\rightarrow \infty $, for any positive
constant $q$.

For the proof of Theorem~\ref{theorem-localization} we need several
auxiliary results. We start with the following technical lemma.

\begin{lemma}
\label{lemma-log-estimate}The logarithm function satisfies the estimates 
\begin{eqnarray*}
\log t &<&t-1-\frac{1}{2}\left( 1-t\right) ^{2}\text{ \qquad }\left(
0<t<1\right) , \\
\log t &<&t-1-\frac{1}{2}\left( t-1\right) ^{2}\left( 2-t\right) \text{
\qquad }\left( 1<t<2\right) .
\end{eqnarray*}
\end{lemma}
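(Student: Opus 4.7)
The plan is to prove each of the two inequalities by forming the difference between the right-hand side and $\log t$, observing that it vanishes at $t=1$, and then using derivative information to show that this difference is strictly positive on the relevant open interval.

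For the first inequality, I would set
\[
g(t):=t-1-\frac{1}{2}(1-t)^{2}-\log t.
\]
Clearly $g(1)=0$. A direct computation gives
\[
g'(t)=1+(1-t)-\frac{1}{t}=2-t-\frac{1}{t}=-\frac{(t-1)^{2}}{t},
\]
which is $\le 0$ on $(0,\infty)$ and vanishes only at $t=1$. Hence $g$ is strictly decreasing on $(0,1]$, and therefore $g(t)>g(1)=0$ for $0<t<1$, which is exactly the claimed inequality.

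For the second inequality, set
\[
h(t):=t-1-\frac{1}{2}(t-1)^{2}(2-t)-\log t.
\]
A short calculation shows that $h(1)=h'(1)=h''(1)=0$, so a single-derivative monotonicity argument is not enough; one has to pass to the third derivative. I would compute
\[
h'''(t)=3-\frac{2}{t^{3}},
\]
which is strictly positive for every $t>1$, since then $2/t^{3}<2<3$. Integrating successively from $1$ and using the vanishing initial values, I obtain $h''(t)>0$, hence $h'(t)>0$, hence $h(t)>0$ for all $t>1$, which contains the range $1<t<2$ of interest.

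The main obstacle is very mild: one must notice that the cubic tangency at $t=1$ forces three vanishing derivatives and therefore requires pushing the analysis one step further than in the first case. An alternative route would be to substitute $t=1\pm u$ with $0<u<1$ and compare the Taylor series of $\log(1\pm u)$ term-by-term, using the Leibniz alternating-series estimate to handle the $\log(1+u)$ case; this works as well but is less transparent than the monotonicity argument outlined above.
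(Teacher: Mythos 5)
Your proof is correct. The first inequality is handled exactly as in the paper: the same auxiliary function $g$, the same derivative $g'(t)=-\frac{(t-1)^{2}}{t}$, the same monotonicity conclusion. For the second inequality you take a slightly longer but valid route: since $h(1)=h'(1)=h''(1)=0$ and $h'''(t)=3-\frac{2}{t^{3}}>0$ for $t>1$, successive integration gives $h>0$ on $(1,\infty)$, which indeed contains $(1,2)$ and even yields the inequality on all of $t>1$. One remark, though: your claim that ``a single-derivative monotonicity argument is not enough'' is not accurate. The paper computes directly
\begin{equation*}
h'(t)=\frac{(t-1)^{2}(3t-2)}{2t}>0\qquad (1<t\leq 2),
\end{equation*}
so the double zero of $h'$ at $t=1$ is no obstacle: $h'$ factors explicitly and is visibly positive, and one derivative plus $h(1)=0$ finishes the proof. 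Your three-derivative argument buys nothing extra here except avoiding that factorization; the paper's version is shorter and equally elementary. Both arguments are correct.
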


\begin{proof}
Let $g(t):=t-1-\dfrac{1}{2}(1-t)^{2}-\log t$, $0<t\leq 1$. Then $g(1)=0$ and 
$g^{\prime }(t)=-\dfrac{(t-1)^{2}}{t}<0$, $0<t<1$. This proves the first
inequality. \newline
Now let $h(t):=t-1-\dfrac{1}{2}(t-1)^{2}(2-t)-\log t$, $1\leq t\leq 2$. Then 
$h(1)=0$ and $h^{\prime }(t)=\dfrac{(t-1)^{2}(3t-2)}{2t}>0$, $1<t\leq 2$,
which proves the second inequality.
\end{proof}

We use the following standard inequality \cite[Theorem 137, Eq. (9.1.4)]%
{Hardy-Divergent-Series-book}.

\begin{lemma}
\label{lemma-Hardy}Given $x,\delta >0$, there exists a constant $c>0$ such
that 
\begin{equation*}
e^{-x}\sum_{\substack{ k\geq 0  \\ \left\vert k-\left\lfloor x\right\rfloor
\right\vert >\delta x}}\frac{x^{k}}{k!}=O\left( e^{-cx}\right) \qquad \left(
x\rightarrow \infty \right) ,
\end{equation*}%
where $c=\delta ^{2}/3$.
\end{lemma}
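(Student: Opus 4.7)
The plan is to bound this Poisson-tail sum via a Chernoff-type argument quantified by the logarithm estimates of Lemma~\ref{lemma-log-estimate}. Writing $p_{k}:=e^{-x}x^{k}/k!$, I would split the restricted sum into an upper tail $S_{+}:=\sum_{k>\lfloor x\rfloor+\delta x}p_{k}$ and a lower tail $S_{-}:=\sum_{k<\lfloor x\rfloor-\delta x}p_{k}$ and handle each separately.

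For $S_{+}$, the ratio $p_{k+1}/p_{k}=x/(k+1)$ is strictly less than $1/(1+\delta)$ once $k\geq k^{+}:=\lfloor x\rfloor+\lceil\delta x\rceil+1$, so the tail is dominated by a geometric series and $S_{+}\leq\bigl((1+\delta)/\delta\bigr)p_{k^{+}}$. An analogous argument, using $p_{k-1}/p_{k}=k/x\leq 1-\delta$ throughout the lower-tail range, gives $S_{-}\leq C\,p_{k^{-}}$ for a suitable innermost index $k^{-}$.

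The elementary inequality $k!\geq(k/e)^{k}$ then yields $p_{k}\leq\exp\bigl(x\phi(k/x)\bigr)$, where $\phi(t):=t-1-t\log t$, reducing everything to lower bounds on $-\phi(1\pm\delta)$. For the upper tail, I would substitute $t=1/(1+\delta)\in(0,1)$ into the first inequality of Lemma~\ref{lemma-log-estimate} and rearrange to obtain
\[
(1+\delta)\log(1+\delta)>\delta+\frac{\delta^{2}}{2(1+\delta)},
\]
which reads $-\phi(1+\delta)>\delta^{2}/(2(1+\delta))$ and hence exceeds $\delta^{2}/3$ whenever $\delta\leq 1/2$. A parallel substitution $t=1/(1-\delta)$ into the second estimate of the same lemma supplies the matching lower bound on $-\phi(1-\delta)$.

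The main obstacle is to retain the clean constant $c=\delta^{2}/3$ uniformly in every $\delta>0$: outside the window where the substitutions above deliver it directly, the Lemma~\ref{lemma-log-estimate} bound degrades, and one must either exploit monotonicity of $g(\delta):=(1+\delta)\log(1+\delta)-\delta$ together with the inclusion $\{|k-\lfloor x\rfloor|>\delta x\}\subset\{|k-\lfloor x\rfloor|>\delta' x\}$ for $\delta'<\delta$, or invoke a Bennett-type refinement $g(\delta)\geq\delta^{2}/(2+2\delta/3)$. Since the statement only requires the existence of some positive $c$, these standard refinements do not endanger the conclusion.
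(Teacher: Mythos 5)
The paper does not actually prove this lemma: it is imported verbatim from Hardy's book and used as a black box, so your self-contained argument is by construction a different route. Your plan is the standard Chernoff/large-deviation proof for Poisson tails and it is sound in outline: dominating each tail by its extreme term through the ratio $p_{k+1}/p_{k}=x/(k+1)$ (ratio $\leq 1/(1+\delta)$ on the upper tail, $p_{k-1}/p_{k}=k/x\leq 1-\delta$ on the lower one) costs only the factor $(1+\delta)/\delta$ resp. $1/\delta$; the bound $k!\geq(k/e)^{k}$ then reduces everything to $-\phi(1\pm\delta)$ with $\phi(t)=t-1-t\log t$, up to an $O(1)$ correction in the exponent from $\lfloor x\rfloor\neq x$ which you should note is harmless. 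I checked the substitution $t=1/(1+\delta)$ into the first inequality of Lemma~\ref{lemma-log-estimate}: it does give $(1+\delta)\log(1+\delta)-\delta>\delta^{2}/(2(1+\delta))\geq\delta^{2}/3$ for $\delta\leq 1/2$. What your version buys is a self-contained paper that recycles Lemma~\ref{lemma-log-estimate}, which is proved there anyway for the localization theorem; the citation buys only brevity.

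Two quantitative points deserve to be made explicit rather than deferred to ``standard refinements''. First, the constant $c=\delta^{2}/3$ is genuinely unattainable for large $\delta$: the exact upper-tail exponent is $(1+\delta)\log(1+\delta)-\delta$, which grows only like $\delta\log\delta$ and falls below $\delta^{2}/3$ already at $\delta=2$ (indeed $3\log 3-2\approx 1.296<4/3$), so no proof recovers the stated constant there and your fallback of shrinking to $\delta'<\delta$ only yields $c=(\delta')^{2}/3$. This is harmless for the paper, since in the sole application (Lemma~\ref{lemma-estimate-szasz-tail}) one may assume $\beta\leq 1$ without loss of generality because the index set only shrinks as $\beta$ grows, but you should say so. Second, your lower-tail substitution $t=1/(1-\delta)$ produces $-\phi(1-\delta)>\delta^{2}(1-2\delta)/\bigl(2(1-\delta)^{2}\bigr)$, which degenerates as $\delta\to 1/2$ and is therefore not the ``matching'' bound you claim; it is cleaner to observe directly that $h(\delta):=\delta+(1-\delta)\log(1-\delta)-\delta^{2}/2$ satisfies $h(0)=0$ and $h'(\delta)=-\log(1-\delta)-\delta\geq 0$, so the lower tail in fact carries the stronger exponent $\delta^{2}/2$ for all $0<\delta<1$ (and is empty for $\delta\geq 1$). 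With these two adjustments your argument is complete.
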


\begin{lemma}
\label{lemma-estimate-szasz-tail}Let $\left( b_{n}\right) $ be a sequence of
positive numbers with $b_{n}\rightarrow 1$ as $n\rightarrow \infty $. Given $%
x,\beta >0$, there exists a constant $c>0$ such that 
\begin{equation*}
\sum_{\substack{ k\geq 0  \\ \left\vert k/n-x\right\vert >\beta }}%
s_{n,k}\left( x\right) b_{n}^{k}=O\left( e^{-cn}\right) \qquad \left(
n\rightarrow \infty \right) .
\end{equation*}
\end{lemma}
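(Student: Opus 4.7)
The plan is to reduce the sum to Hardy's Poisson tail estimate (Lemma~\ref{lemma-Hardy}). Setting $u_n:=nxb_n$, the factor $b_n^k$ can be absorbed into the Sz\'asz weights, giving
\begin{equation*}
\sum_{\substack{k\geq 0 \\ |k/n-x|>\beta}} s_{n,k}(x)\, b_n^k
= e^{nx(b_n-1)}\, e^{-u_n}\!\sum_{\substack{k\geq 0 \\ |k/n-x|>\beta}} \frac{u_n^k}{k!}.
\end{equation*}
This is the right normalisation: since $b_n\to 1$, $u_n\sim nx$, so the exceptional set $\{k:|k/n-x|>\beta\}$ is essentially a large-deviation event for a Poisson variable with mean $u_n$, which is exactly the situation Lemma~\ref{lemma-Hardy} was designed for.

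Next I would compare the two tail regions. For $n$ large enough one has $|u_n-nx|=nx|b_n-1|<\beta n/4$ and $1<\beta n/4$, so the triangle inequality $|k-\lfloor u_n\rfloor|\geq|k-nx|-|u_n-nx|-1$ gives $|k-\lfloor u_n\rfloor|>\beta n/2$ for every $k$ in the summation range. Because $u_n\leq 2nx$ eventually, this rephrases as $|k-\lfloor u_n\rfloor|>\delta u_n$ with $\delta:=\beta/(4x)$, and Lemma~\ref{lemma-Hardy} (with argument $u_n$) yields
\begin{equation*}
e^{-u_n}\!\sum_{\substack{k\geq 0 \\ |k/n-x|>\beta}}\!\frac{u_n^k}{k!}
\;\leq\; e^{-u_n}\!\sum_{\substack{k\geq 0 \\ |k-\lfloor u_n\rfloor|>\delta u_n}}\!\frac{u_n^k}{k!}
\;=\;O\!\left(e^{-c_1 n}\right)
\end{equation*}
for some constant $c_1>0$ (any value below $\delta^2 x/3$ works, since $u_n/n\to x$).

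It remains to control the prefactor $e^{nx(b_n-1)}$, and this is the one delicate point: the bare assumption $b_n\to 1$ provides no rate, so $n(b_n-1)$ could well be unbounded and one cannot control the prefactor by a constant. The resolution is an order-of-quantifiers trick: once $c_1>0$ has been fixed by the previous step, one may choose $\varepsilon>0$ \emph{afterwards} with $\varepsilon x<c_1/2$; for $n$ large enough, $|b_n-1|<\varepsilon$ and hence $e^{nx(b_n-1)}\leq e^{\varepsilon nx}$, which is dominated by half of the exponential decay obtained from Hardy. The product then is $O(e^{-(c_1/2)n})$, so the lemma holds with $c:=c_1/2$. No other obstacle arises.
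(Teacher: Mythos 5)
Your proof is correct and follows essentially the same route as the paper: absorb $b_n^k$ into the Poisson parameter, recentre the tail at the shifted mean, invoke Lemma~\ref{lemma-Hardy}, and check that the resulting exponential prefactor is beaten by the Hardy decay. The only difference is bookkeeping: the paper first majorizes $b_n$ by a fixed $b>1$ chosen so that $(b-1)-b\gamma^2/3<0$, whereas you carry $u_n=nxb_n$ directly and fix $\varepsilon$ with $\varepsilon x<c_1/2$ only after $c_1$ is determined --- a correct and, if anything, slightly cleaner handling of the quantifiers.
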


\begin{proof}
Let $b>1$ such that $b-1<\beta /\left( 2x\right) $ and $b<12/\left( 12-\beta
^{2}\right) $. Put $\gamma :=\beta /2$. Then $\left\vert k-nx\right\vert
>\beta n$ implies $\left\vert k-\left\lfloor nxb\right\rfloor \right\vert
>\gamma n$, for sufficiently large $n$. This can be seen as follows.
Firstly, we have $n\left( x-\beta \right) <n\left( x-\beta \right)
+\left\lfloor nxb\right\rfloor -nxb+1=\left\lfloor nxb\right\rfloor -n\gamma
+nx-nxb+1-n\gamma <\left\lfloor nxb\right\rfloor -n\gamma $, for $n>1/\gamma 
$. Secondly, we have $n\left( x+\beta \right) >n\left( x+\beta \right)
+\left\lfloor nxb\right\rfloor -nxb=\left\lfloor nxb\right\rfloor +n\gamma
+n\gamma -nx\left( b-1\right) >\left\lfloor nxb\right\rfloor +n\gamma
+n\gamma -nx\beta /\left( 2x\right) =\left\lfloor nxb\right\rfloor +n\gamma $%
. For sufficiently large $n$, we have $b_{n}<b$ and 
\begin{eqnarray*}
\sum_{\substack{ k\geq 0  \\ \left\vert k/n-x\right\vert >\beta }}%
s_{n,k}\left( x\right) b_{n}^{k} &=&e^{-nx}\sum_{\substack{ k\geq 0  \\ %
\left\vert k-\left\lfloor nxb\right\rfloor \right\vert >\gamma n}}\frac{%
\left( nxb_{n}\right) ^{k}}{k!}\leq e^{-nx}\sum_{\substack{ k\geq 0  \\ %
\left\vert k-\left\lfloor nxb\right\rfloor \right\vert >\gamma n}}\frac{%
\left( nxb\right) ^{k}}{k!} \\
&=&e^{\left( b-1\right) nx}O\left( e^{-\left( \gamma ^{2}/3\right)
nxb}\right) \qquad \left( n\rightarrow \infty \right) ,
\end{eqnarray*}%
by Lemma~\ref{lemma-Hardy}. Noting that $\left( b-1\right) -b\cdot \gamma
^{2}/3=b\left( 1-\gamma ^{2}/3\right) -1<12/\left( 12-\beta ^{2}\right)
\cdot \left( 1-\beta ^{2}/12\right) -1<0$ completes the proof.
\end{proof}

\begin{proof}[Proof of Theorem~\protect\ref{theorem-localization}]
Let $f\in E_{A}$. First consider the special case $x=0$. Since $\left(
S_{n,j}f\right) \left( 0\right) =0$, for $j\geq 1$, we assume that $j\leq 0$%
. If $f\left( t\right) =0$, for $t\in \left[ 0,x+\delta \right) $, then 
\begin{eqnarray*}
\left\vert \left( S_{n,j}f\right) \left( 0\right) \right\vert &\leq
&\sum_{k=0}^{\infty }s_{n,k}(0)n\int_{\delta }^{\infty
}s_{n,k-j}(t)\left\vert f(t)\right\vert dt\leq \frac{K}{\left( -j\right) !}%
n\int_{\delta }^{\infty }\left( nt\right) ^{-j}e^{-\left( n-A\right) t}dt \\
&=&\frac{K}{\left( -j\right) !}\int_{n\delta }^{\infty
}t^{-j}e^{-t}dt=O\left( e^{-cn}\right) \qquad \left( n\rightarrow \infty
\right) ,
\end{eqnarray*}%
for each positive $c<\delta $.\newline
Now let us turn to the case $x>0$. Let $f\left( t\right) =0$, if $x-\delta
<t<x+\delta $. Without loss of generality, we assume $\delta <x/3$.
Furthermore, put $\beta =\delta /2$. We have 
\begin{align}
\left( S_{n,j}f\right) \left( x\right) & =f\left( 0\right)
\sum_{k=0}^{j-1}s_{n,k}\left( x\right)  \notag \\
& +\left( \sum_{\substack{ k\geq 0  \\ \left\vert k/n-x\right\vert >\beta }}%
+\sum_{\substack{ k\geq 0  \\ \left\vert k/n-x\right\vert \leq \beta }}%
\right) s_{n,k+j}\left( x\right) \cdot n\int_{0}^{\infty }s_{n,k}\left(
t\right) f\left( t\right) dt  \notag \\
& =T_{0}+T_{1}+T_{2},  \label{W_n,j-composition}
\end{align}%
say. Obviously, 
\begin{equation*}
T_{0}=f\left( 0\right) \sum_{k=0}^{j-1}s_{n,k}\left( x\right) =O\left(
n^{j-1}e^{-nx}\right) \text{ \qquad }\left( n\rightarrow \infty \right) .
\end{equation*}%
Secondly, we estimate $\left\vert T_{1}\right\vert $. Since $\left\vert
f\left( t\right) \right\vert \leq Ke^{At}$, for $t\geq 0$, we have, for $n>A$%
, 
\begin{align*}
\left\vert n\int_{0}^{\infty }s_{n,k}\left( t\right) f\left( t\right)
dt\right\vert & \leq Kn\frac{1}{k!}\int_{0}^{\infty }e^{-nt}\left( nt\right)
^{k}e^{At}dt=K\left( 1+\frac{A}{n-A}\right) ^{k+1} \\
& \leq K\left( 1+\frac{A}{n-A}\right) e^{kA/\left( n-A\right) }.
\end{align*}%
Hence, for sufficiently large $n>2A$, there is a constant $c>0$, such that 
\begin{align*}
\left\vert T_{1}\right\vert & \leq 2K\sum_{\substack{ k\geq 0  \\ \left\vert
k/n-x\right\vert >\beta }}s_{n,k+j}\left( x\right) e^{kA/\left( n-A\right) }
\\
& =2K\sum_{\substack{ k\geq 0  \\ \left\vert k/n-x\right\vert >\beta }}%
s_{n,k}\left( x\right) \frac{k!}{\left( k+j\right) !}\left( nx\right)
^{j}e^{kA/\left( n-A\right) } \\
& \leq 2K\left( nx\right) ^{j}\sum_{\substack{ k\geq 0  \\ \left\vert
k/n-x\right\vert >\beta }}s_{n,k}\left( x\right) e^{2Ak/n}=O\left(
e^{-cn}\right) \text{ \qquad }\left( n\rightarrow \infty \right) ,
\end{align*}%
by an application of Lemma~\ref{lemma-estimate-szasz-tail} with $%
b_{n}=e^{2A/n}$. Now, we are going to estimate $\left\vert T_{2}\right\vert $
in $\left( \ref{W_n,j-composition}\right) $. We consider values $k=n\left(
x+\vartheta \right) $ with $\left\vert \vartheta \right\vert \leq \beta $.
Then, for $n>2A$, 
\begin{equation*}
U_{n,k}:=\left\vert n\int_{0}^{x-\delta }s_{n,k}\left( t\right) f\left(
t\right) dt\right\vert \leq K\frac{1}{k!}\left( \frac{n}{n-A}\right)
^{k+1}\int_{0}^{\left( x-\delta \right) \left( n-A\right) }e^{-t}t^{k}dt
\end{equation*}%
and 
\begin{align*}
V_{n,k}& :=\left\vert n\int_{x+\delta }^{\infty }s_{n,k}\left( t\right)
f\left( t\right) dt\right\vert \\
& \leq K\frac{1}{k!}\left( \frac{n}{n-2A}\right) ^{k+1}\int_{\left( x+\delta
\right) \left( n-2A\right) }^{\infty }e^{-t}t^{k}e^{-At/\left( n-2A\right)
}dt.
\end{align*}%
Define $g_{k}\left( t\right) =e^{-t}t^{k}$. Since $g_{k}^{\prime }\left(
t\right) =\left( k-t\right) t^{k-1}e^{-t}$, the function $g_{k}$ is unimodal
on $\left( 0,\infty \right) $ with a unique maximum at $t=k$. For
sufficiently large $n$, we have 
\begin{equation*}
U_{n,k}\leq K\frac{n}{k!}\left( \frac{n}{n-A}\right) ^{k}\left( x-\delta
\right) g_{k}\left( \left( x-\delta \right) \left( n-A\right) \right)
\end{equation*}%
and 
\begin{equation*}
V_{n,k}\leq \frac{K}{A}\frac{n}{k!}\left( \frac{n}{n-2A}\right)
^{k}g_{k}\left( \left( x+\delta \right) \left( n-2A\right) \right) ,
\end{equation*}%
since 
\begin{equation*}
\int_{\left( x+\delta \right) \left( n-2A\right) }^{\infty }e^{-At/\left(
n-2A\right) }dt=\frac{n-2A}{A}e^{-A\left( x+\delta \right) }\leq \frac{n-2A}{%
A}.
\end{equation*}%
Using Stirling's formula 
\begin{equation*}
k!=\sqrt{2\pi }k^{k+1/2}\exp \left( -k+\frac{\sigma }{12k}\right) ,\text{
\qquad }\left( k>0,\text{ }0<\sigma <1\right)
\end{equation*}%
for $k=n\left( x+\vartheta \right) $, we obtain 
\begin{eqnarray*}
U_{n,k} &\leq &\frac{K\left( x-\delta \right) ne^{k}}{\sqrt{2\pi }k^{k+1/2}}%
\left( \frac{n}{n-A}\right) ^{k}g_{k}\left( \left( x-\delta \right) \left(
n-A\right) \right) \\
&=&\frac{K\left( x-\delta \right) ne^{n\left( x+\vartheta \right) }}{\sqrt{%
2\pi }\left( n\left( x+\vartheta \right) \right) ^{n\left( x+\vartheta
\right) +1/2}}\left( \frac{n}{n-A}\right) ^{n\left( x+\vartheta \right) } \\
&&\times e^{-\left( x-\delta \right) \left( n-A\right) }\left( \left(
x-\delta \right) \left( n-A\right) \right) ^{n\left( x+\vartheta \right) } \\
&=&\frac{K\left( x-\delta \right) e^{A\left( x-\delta \right) }}{\sqrt{2\pi
\left( x+\vartheta \right) }}\sqrt{n}e^{n\left( \delta +\vartheta \right)
}\left( \frac{x-\delta }{x+\vartheta }\right) ^{n\left( x+\vartheta \right) }
\end{eqnarray*}%
and 
\begin{eqnarray*}
V_{n,k} &\leq &\frac{K}{A}\frac{ne^{k}}{\sqrt{2\pi }k^{k+1/2}}\left( \frac{n%
}{n-2A}\right) ^{k}g_{k}\left( \left( x+\delta \right) \left( n-2A\right)
\right) \\
&=&\frac{K}{A}\frac{ne^{n\left( x+\vartheta \right) }}{\sqrt{2\pi }\left(
n\left( x+\vartheta \right) \right) ^{n\left( x+\vartheta \right) +1/2}}%
\left( \frac{n}{n-2A}\right) ^{n\left( x+\vartheta \right) } \\
&&\times e^{-\left( x+\delta \right) \left( n-2A\right) }\left( \left(
x+\delta \right) \left( n-2A\right) \right) ^{n\left( x+\vartheta \right) }
\\
&=&\frac{K}{A}\frac{e^{2A\left( x+\delta \right) }}{\sqrt{2\pi \left(
x+\vartheta \right) }}\sqrt{n}e^{-n\left( \delta -\vartheta \right) }\left( 
\frac{x+\delta }{x+\vartheta }\right) ^{n\left( x+\vartheta \right) }.
\end{eqnarray*}%
Rewriting and applying Lemma~\ref{lemma-log-estimate} we obtain, with $%
t=\left( x-\delta \right) /\left( x+\vartheta \right) $, 
\begin{align*}
e^{\delta +\vartheta }\left( \frac{x-\delta }{x+\vartheta }\right)
^{x+\vartheta }& =\exp \left( \delta +\vartheta +\left( x+\vartheta \right)
\log \frac{x-\delta }{x+\vartheta }\right) \\
& <\exp \left( \delta +\vartheta +\left( x+\vartheta \right) \frac{-\delta
-\vartheta }{x+\vartheta }-\frac{\left( x+\vartheta \right) }{2}\left( \frac{%
\delta +\vartheta }{x+\vartheta }\right) ^{2}\right) \\
& =\exp \left( -\frac{\left( \delta +\vartheta \right) ^{2}}{2\left(
x+\vartheta \right) }\right) \leq \exp \left( -\frac{\left( \delta -\beta
\right) ^{2}}{2\left( x+\beta \right) }\right) \\
& =\exp \left( -\frac{\delta ^{2}}{4\left( 2x+\delta \right) }\right)
=:C_{1}\left( x,\delta \right)
\end{align*}%
and, with $t=\left( x+\delta \right) /\left( x+\vartheta \right) $, 
\begin{align*}
& e^{-\delta +\vartheta }\left( \frac{x+\delta }{x+\vartheta }\right)
^{x+\vartheta }=\exp \left( -\delta +\vartheta +\left( x+\vartheta \right)
\log \frac{x+\delta }{x+\vartheta }\right) \\
& <\exp \left( -\delta +\vartheta +\left( x+\vartheta \right) \frac{\delta
-\vartheta }{x+\vartheta }-\frac{\left( x+\vartheta \right) }{2}\left( \frac{%
\delta -\vartheta }{x+\vartheta }\right) ^{2}\left( 2-\frac{x+\delta }{%
x+\vartheta }\right) \right) \\
& =\exp \left( -\frac{\left( \delta -\vartheta \right) ^{2}}{2\left(
x+\vartheta \right) }\frac{x+2\vartheta -\delta }{x+\vartheta }\right) \\
& \leq \exp \left( -\frac{\left( \delta -\beta \right) ^{2}\left( x-2\beta
-\delta \right) }{2\left( x+\beta \right) ^{2}}\right) =\exp \left( -\frac{%
\delta ^{2}\left( x-2\delta \right) }{2\left( 2x+\delta \right) ^{2}}\right)
=:C_{2}\left( x,\delta \right) .
\end{align*}%
Without loss of generality, one can choose $\delta >0$ so small that both
constants are less than $1$. Now we can estimate $T_{2}$ in $\left( \ref%
{W_n,j-composition}\right) $. With $q:=\max \left\{ C_{1}\left( x,\delta
\right) ,C_{2}\left( x,\delta \right) \right\} $ we have 
\begin{align*}
\left\vert T_{2}\right\vert & \leq \sum_{\substack{ k\geq 0  \\ \left\vert
k/n-x\right\vert \leq \beta }}s_{n,k+j}\left( x\right) \cdot \left\vert
n\int_{0}^{\infty }s_{n,k}\left( t\right) f\left( t\right) dt\right\vert
\leq \sum_{k\geq 0}s_{n,k+j}\left( x\right) \cdot \left(
U_{n,k}+V_{n,k}\right) \\
& =\sqrt{n}O\left( q^{n}\right) \text{ \qquad }\left( n\rightarrow \infty
\right) .
\end{align*}%
Observing that $q<1$ concludes the proof.
\end{proof}

\section{Asymptotic expansions \label{section-Asymptotic expansions}}

In this section we derive the complete asymptotic expansion for the sequence
of operators $S_{n,j}$ in the form 
\begin{equation}
\left( S_{n,j}f\right) \left( x\right) \sim f(x)+\sum_{k=1}^{\infty
}c_{k,j}\left( f,x\right) \;n^{-k}\qquad (n\rightarrow \infty ),
\label{complete-asymptotic-expansion}
\end{equation}%
provided that $f$ admits derivatives of sufficiently high order at $x>0$.
Formula $\left( \ref{complete-asymptotic-expansion}\right) $ means that, for
all $q=0,1,2,\ldots $, it holds 
\begin{equation*}
\left( S_{n,j}f\right) \left( x\right) =\sum_{k=0}^{q}c_{k,j}\left(
f,x\right) \text{ }n^{-k}+o(n^{-q})\qquad (n\rightarrow \infty )
\end{equation*}%
where $c_{0,j}\left( f,x\right) =f\left( x\right) $. The coefficients $%
c_{k,j}\left( f,x\right) $, which are independent of $n$, will be given in a
concise form using the differential operator $\mathcal{D}_{j}$ as defined in
Eq. (\ref{eq.X2}). To this end, recall definition (\ref{eq.X3}), i.e., 
\begin{equation*}
\mathcal{D}_{j}^{2k}=e_{j}D^{k}(e_{k-j}D^{k}),\,\text{\qquad }k\in {\mathbb{N%
}},
\end{equation*}%
and relation (\ref{eq.X1}), i.e., $\mathcal{D}_{j}^{2k}=\left( \mathcal{D}%
_{j}^{2}\right) ^{k}$.

\begin{theorem}
\label{theorem-asymptotic expansion}Let $q$ be a positive integer. For each
function $f\in E$ having the derivative $f^{\left( 2q\right) }\left(
x\right) $, the operators $S_{n,j}$ possess the asymptotic expansion 
\begin{equation*}
\left( S_{n,j}f\right) \left( x\right) =f\left( x\right) +\sum_{k=1}^{q}%
\frac{1}{k!n^{k}}\left( {\mathcal{D}}_{j}^{2k}f\right) \left( x\right)
+o\left( n^{-q}\right) \text{ \qquad }\left( n\rightarrow \infty \right) .
\end{equation*}
\end{theorem}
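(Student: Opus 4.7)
The plan is to combine Taylor's theorem at $x$ with Proposition~\ref{prop-central-moments} for the central moments and Theorem~\ref{theorem-localization} to discard the Taylor remainder away from $x$. Since $f^{(2q)}(x)$ exists, Peano's form of Taylor's theorem yields
$$f(t) = \sum_{s=0}^{2q}\frac{f^{(s)}(x)}{s!}(t-x)^s + r(t), \qquad r(t)=o\bigl(|t-x|^{2q}\bigr)\;(t\to x),$$
with $r\in E$. By linearity,
$$(S_{n,j}f)(x) = \sum_{s=0}^{2q}\frac{f^{(s)}(x)}{s!}(S_{n,j}\psi_x^s)(x) + (S_{n,j}r)(x).$$

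\textbf{Polynomial part.} Proposition~\ref{prop-central-moments} gives $(S_{n,j}\psi_x^s)(x) = M_{n,j,s,1}(x) + M_{n,j,s,2}(x)$. The piece $M_{n,j,s,2}(x)$ carries a factor $e^{-nx}$ and is therefore $o(n^{-q})$ for $x>0$. Setting $k = s-m$ in $M_{n,j,s,1}$ and regrouping by powers of $n^{-1}$, the coefficient of $n^{-k}$ in the polynomial contribution equals
$$\sum_{s=k}^{\min(2k,\,2q)}\frac{f^{(s)}(x)\,x^{s-k}}{(s-k)!\,(2k-s)!}(k-j)^{\underline{2k-s}}.$$
On the other hand, expanding $\mathcal{D}_j^{2k}f = e_jD^k(e_{k-j}D^kf)$ by the Leibniz rule and writing $s=k+m$ yields
$$\frac{1}{k!}(\mathcal{D}_j^{2k}f)(x) = \sum_{s=k}^{2k}\frac{f^{(s)}(x)\,x^{s-k}}{(s-k)!\,(2k-s)!}(k-j)^{\underline{2k-s}}.$$
These agree for $1\le k\le q$, and trivially match at $k=0$, giving $f(x)$. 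Terms indexed by $k>q$ contribute $O(n^{-k})=o(n^{-q})$.

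\textbf{Remainder.} To show $(S_{n,j}r)(x) = o(n^{-q})$, given $\varepsilon>0$ fix $\delta>0$ so that $|r(t)|\le\varepsilon(t-x)^{2q}$ for $t\in(x-\delta,x+\delta)\cap[0,\infty)$. Decompose $r=r_1+r_2$ where $r_1$ equals $r$ on $(x-\delta,x+\delta)$ and vanishes elsewhere, and $r_2=r-r_1$. Since $r_2 \in E$ and vanishes in a neighborhood of $x$, Theorem~\ref{theorem-localization} yields $(S_{n,j}r_2)(x) = O(e^{-cn})$. Moreover,
$$|(S_{n,j}r_1)(x)| \le \varepsilon\,(S_{n,j}\psi_x^{2q})(x) = \varepsilon\cdot O(n^{-q}),$$
by the leading behavior $(2q)!\,x^q/(q!\,n^q)$ of the $2q$-th central moment obtained from Proposition~\ref{prop-central-moments}. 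Since $\varepsilon>0$ is arbitrary, $(S_{n,j}r)(x) = o(n^{-q})$, completing the proof.

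\textbf{Main obstacle.} The hard part will be the combinatorial matching in the polynomial step: one must verify that the falling factorial expression $s^{\underline{2m}}(s-m-j)^{\underline{s-2m}}$ arising in $M_{n,j,s,1}$, once reindexed by $k=s-m$, coincides with the Leibniz expansion of $\mathcal{D}_j^{2k}f$. The reduction relies on the identities $s!/s^{\underline{2(s-k)}} = (2k-s)!$ and the vanishing of $(k-j)^{\underline{2k-s}}$ outside the range $k\le s\le 2k$, which jointly force both sums to agree term by term. Once this is in place the rest of the argument is a routine $\varepsilon$-$\delta$ application of the already established localization theorem and central moment estimates.
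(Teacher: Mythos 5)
Your proof is correct, and it reaches the result by a genuinely different (if related) route from the paper. The paper first replaces $f$ by $\varphi f$ for a smooth cutoff $\varphi$ equal to $1$ near $x$, uses Theorem~\ref{theorem-localization} to see that this changes $(S_{n,j}f)(x)$ only by an exponentially small amount, and then invokes Sikkema's general lemma (Lemma~\ref{Lemma-Sikkema}), which needs the central moment estimates up to order $2q+2$; after that, the combinatorial identification of $\sum_{s}\frac{f^{(s)}(x)}{s!}(S_{n,j}\psi_x^{s})(x)$ with $\sum_{k}\frac{1}{k!n^{k}}(\mathcal{D}_j^{2k}f)(x)$ is carried out exactly as you do it (reindex $k=s-m$, use $s^{\underline{2(s-k)}}=s!/(2k-s)!$, and recognize the Leibniz expansion of $x^{j}\bigl(x^{k-j}f^{(k)}(x)\bigr)^{(k)}$, with the terms $k>q$ absorbed into $o(n^{-q})$). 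You instead expand $f$ by Taylor--Peano directly and control the remainder yourself: the near part by positivity against $\varepsilon\,\psi_x^{2q}$ together with the $O(n^{-q})$ bound on the $2q$-th central moment, and the far part by the localization theorem applied to $r_2$. This bypasses Sikkema's lemma, needs moments only up to order $2q$, and is in effect a self-contained re-proof of that lemma adapted to operators enjoying a localization property; the paper's version buys generality (Sikkema's lemma applies without any localization hypothesis, at the price of two extra moment orders). Two small points you should make explicit: the argument, like the paper's, requires $x>0$ (both for the exponential decay of the $e^{-nx}$ term $M_{n,j,s,2}(x)$ and for the localization step), and one should record that $r$, $r_1$, $r_2$ all lie in $E$ --- immediate, since $f\in E$ and the Taylor polynomial belongs to $E_A$ for every $A>0$ --- so that $S_{n,j}$ may legitimately be applied to each piece.
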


\begin{remark}
For a function $f\in E$ having all derivatives at $x>0$, we have the
complete asymptotic expansion 
\begin{equation*}
\left( S_{n,j}f\right) \left( x\right) \sim f\left( x\right)
+\sum_{k=1}^{\infty }\frac{1}{k!n^{k}}\left( {\mathcal{D}}_{j}^{2k}f\right)
\left( x\right) \text{ \qquad }\left( n\rightarrow \infty \right) .
\end{equation*}
\end{remark}

\begin{remark}
Note that $\left( {\mathcal{D}}_{j}^{2k}e_{j}\right) \left( x\right) =0$,
for all $k\in \mathbb{N}$. This reflects the fact that $S_{n,j}$ preserves
the monomial $e_{j}$.
\end{remark}

\begin{remark}
In the special case $j=0$, Theorem~\ref{theorem-asymptotic expansion}
recovers the complete asymptotic expansion 
\begin{equation*}
\left( S_{n,0}f\right) \left( x\right) \sim f\left( x\right)
+\sum_{k=1}^{\infty }\frac{1}{k!n^{k}}\left( x^{k}f^{\left( k\right) }\left(
x\right) \right) ^{\left( k\right) }\text{ \qquad }\left( n\rightarrow
\infty \right)
\end{equation*}%
for the classical Sz\'{a}sz--Mirakjan--Durrmeyer operators. This result can
be found in \cite[Corollary~2.4 with $r=0$]{Abel-BDV-Ivan-Maratea-2004}.\ In
the special case $j=1$ we get the relation 
\begin{equation*}
\left( S_{n,1}f\right) \left( x\right) \sim f\left( x\right)
+\sum_{k=1}^{\infty }\frac{x}{k!n^{k}}\left( x^{k-1}f^{\left( k\right)
}\left( x\right) \right) ^{\left( k\right) }\text{ \qquad }\left(
n\rightarrow \infty \right)
\end{equation*}%
for the genuine Sz\'{a}sz--Mirakjan--Durrmeyer operators.
\end{remark}

In order to derive Theorem~\ref{theorem-asymptotic expansion}, a general
approximation theorem due to Sikkema \cite[Theorem~3]{Sikkema-1970a} will be
applied. For $k\in \mathbb{N}$ and $x>0$, let $H^{\left( k\right) }\left(
x\right) $ denote the class of all locally bounded real functions $f:\left[
0,\infty \right) \rightarrow \mathbb{R}$, which are $k$ times differentiable
at $x$, and satisfy the additional condition $f\left( t\right) =O\left(
t^{-k}\right) $ as $t\rightarrow +\infty $. An inspection of the proof of
Sikkema's result reveals that it can be stated in the following form which
is more appropriate for our purposes.

\begin{lemma}
\label{Lemma-Sikkema}Let $q\in \mathbb{N}$ and let $\left( L_{n}\right)
_{n\in \mathbb{N}}$ be a sequence of positive linear operators, $%
L_{n}:H^{\left( 2q\right) }\left( x\right) \rightarrow C\left[ 0,+\infty
\right) $, $x\in \left[ 0,+\infty \right) $. Suppose that the operators $%
L_{n}$ apply to $\psi _{x}^{2q+1}$ and to $\psi _{x}^{2q+2}$. Then the
condition 
\begin{equation*}
\left( L_{n}\psi _{x}^{s}\right) \left( x\right) =O\left( n^{-\left\lfloor
\left( s+1\right) /2\right\rfloor }\right) \text{ \qquad }\left(
n\rightarrow \infty \right) ,\text{ \qquad for }s=0,1,\ldots ,2q+2,
\end{equation*}%
implies, for each function $f\in H^{\left( 2q\right) }\left( x\right) $, the
asymptotic relation 
\begin{equation*}
\left( L_{n}f\right) \left( x\right) =\sum_{s=0}^{2q}\frac{f^{\left(
s\right) }\left( x\right) }{s!}\left( L_{n}\psi _{x}^{s}\right) \left(
x\right) +o\left( n^{-q}\right) \text{ }\qquad \left( n\rightarrow \infty
\right) .
\end{equation*}
\end{lemma}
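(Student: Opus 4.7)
The plan is to apply Peano's form of Taylor's theorem at $x$ and then use positivity and linearity of $L_n$ in the familiar ``split at $\delta$'' style. I would write
\[ f(t) = T_{2q}(t) + r(t), \qquad T_{2q}(t) := \sum_{s=0}^{2q}\frac{f^{(s)}(x)}{s!}(t-x)^s, \]
so that the $2q$-fold differentiability of $f$ at $x$ delivers, for every $\varepsilon>0$, some $\delta>0$ with $|r(t)| \leq \varepsilon (t-x)^{2q}$ whenever $|t-x|<\delta$. By linearity,
\[ (L_n f)(x) = \sum_{s=0}^{2q}\frac{f^{(s)}(x)}{s!}(L_n\psi_x^s)(x) + (L_n r)(x), \]
and the whole task reduces to showing $(L_n r)(x) = o(n^{-q})$.

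Next I would combine the local Peano estimate with a polynomial global bound. Since $f\in H^{(2q)}(x)$ and $T_{2q}$ has degree $2q$, one obtains $|r(t)| \leq C(1+|t-x|^{2q})$ on $[0,\infty)$, which on $\{|t-x|\geq\delta\}$ simplifies to $|r(t)| \leq C'|t-x|^{2q}$. Since the indicator satisfies $\chi_{\{|t-x|\geq\delta\}}\leq (t-x)^2/\delta^2$, the contribution of the tail can be dominated by $(C'/\delta^2)(t-x)^{2q+2}$, yielding the uniform pointwise estimate
\[ |r(t)| \leq \varepsilon(t-x)^{2q} + \frac{C'}{\delta^2}(t-x)^{2q+2} \qquad (t\geq 0). \]
Applying the positive operator $L_n$ and inserting the assumed moment orders,
\[ |(L_n r)(x)| \leq \varepsilon (L_n\psi_x^{2q})(x) + \frac{C'}{\delta^2}(L_n\psi_x^{2q+2})(x) = \varepsilon\cdot O(n^{-q}) + O(n^{-(q+1)}), \]
so $\limsup_{n\to\infty} n^q |(L_n r)(x)| \leq K\varepsilon$. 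Since $\varepsilon$ was arbitrary, $(L_n r)(x) = o(n^{-q})$.

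The main obstacle is producing a single pointwise upper bound on $|r|$ that is simultaneously sharp near $x$ (to exploit the Peano information) and lies in the domain of $L_n$ with the correct decay rate (to make the tail $o(n^{-q})$ rather than merely $O(n^{-q})$). The hypothesis $(L_n\psi_x^{2q+2})(x)=O(n^{-(q+1)})$ is precisely what absorbs the far-field contribution at a rate strictly better than $n^{-q}$. The companion assumption on $\psi_x^{2q+1}$ is not explicitly invoked in the remainder estimate above; it is included because it fits the uniform pattern $O(n^{-\lfloor(s+1)/2\rfloor})$ and can replace the even-power tail bound in variants of the argument (for instance, by Cauchy--Schwarz applied to $(t-x)^{2q+1}$ sandwiched between $(t-x)^{2q}$ and $(t-x)^{2q+2}$), and guarantees that every term appearing in the main sum is well defined.
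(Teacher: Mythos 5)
Your argument is correct: the Peano--Taylor decomposition, the pointwise majorant $|r(t)|\leq \varepsilon\,\psi_x^{2q}(t)+(C'/\delta^2)\,\psi_x^{2q+2}(t)$ obtained by dominating the indicator of $\{|t-x|\geq\delta\}$ by $\psi_x^2(t)/\delta^2$, and the final $\limsup$ step (valid even though $\delta$, hence $C'/\delta^2$, depends on $\varepsilon$, since that term is $O(n^{-q-1})=o(n^{-q})$ for each fixed $\varepsilon$) together give exactly the claimed $o(n^{-q})$ remainder. The paper does not prove this lemma itself but defers to an inspection of Sikkema's proof, and your reconstruction is precisely that standard argument, including the correct observation that the hypothesis on $\psi_x^{2q+1}$ is not actually needed for this version of the estimate.
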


\begin{proof}[Proof of Theorem~\protect\ref{theorem-asymptotic expansion}]
Let $f\in E$. Given $x>0$, put $U_{r}\left( x\right) =\left( x-r,x+r\right)
\cap \left[ 0,+\infty \right) $, for $r>0$. Let $\delta >0$ be given.
Suppose that $f^{\left( 2q\right) }\left( x\right) $ exists. Choose a
function $\varphi \in C^{\infty }\left( \left[ 0,+\infty \right) \right) $
with $\varphi \left( x\right) =1$ on $U_{\delta }\left( x\right) $ and $%
\varphi \left( x\right) =0$ on $\left[ 0,+\infty \right) \setminus
U_{2\delta }\left( x\right) $. Put $\widetilde{f}=\varphi f$. Then we have $%
\widetilde{f}\equiv f$ on $U_{\delta }\left( x\right) $ which implies $%
\widetilde{f}^{\left( j\right) }\left( x\right) =f^{\left( j\right) }\left(
x\right) $, for $j=0,\ldots ,2q$, and $\widetilde{f}\equiv 0$ on $\left[
0,+\infty \right) \setminus U_{2\delta }\left( x\right) $. By the
localization theorem (Theorem~\ref{theorem-localization}), $\left(
S_{n,j}\left( f-\widetilde{f}\right) \right) \left( x\right) $ decays
exponentially fast as $n\rightarrow \infty $. Consequently, $\widetilde{f}$
and $f$ possess the same asymptotic expansion of the form $\left(
S_{n,j}f\right) \left( x\right) =\sum_{k=0}^{q}a_{k,j}\left( f,x\right)
n^{-k}+o\left( n^{-q}\right) $ as $n\rightarrow \infty $. Therefore, without
loss of generality, we can assume that $f\equiv 0$ on $\left[ 0,+\infty
\right) \setminus U_{2\delta }\left( x\right) $. By Proposition~\ref%
{prop-central-moments}, we have $\left( S_{n,j}\psi _{x}^{2q}\right) \left(
x\right) =O\left( n^{-q}\right) $ as $n\rightarrow \infty $. Under these
conditions, Lemma~\ref{Lemma-Sikkema} implies that 
\begin{equation*}
\left( S_{n,j}f\right) \left( x\right) =\sum_{s=0}^{2q}\frac{f^{\left(
s\right) }\left( x\right) }{s!}\left( S_{n,j}\psi _{x}^{s}\right) \left(
x\right) +o\left( n^{-q}\right) \text{ }\qquad \left( n\rightarrow \infty
\right) .
\end{equation*}%
By Proposition~\ref{prop-central-moments}, we obtain 
\begin{eqnarray*}
\left( S_{n,j}f\right) \left( x\right) &=&\sum_{s=0}^{2q}\frac{f^{\left(
s\right) }\left( x\right) }{s!}s!\sum_{k=\left\lfloor \left( s+1\right)
/2\right\rfloor }^{s}\frac{x^{s-k}}{\left( s-k\right) !n^{k}}\binom{k-j}{2k-s%
}+o\left( n^{-q}\right) \\
&=&\sum_{k=0}^{q}\frac{1}{n^{k}}\sum_{s=k}^{2k}\frac{f^{\left( s\right)
}\left( x\right) }{\left( s-k\right) !}\binom{k-j}{2k-s}x^{s-k}+o\left(
n^{-q}\right) \\
&=&\sum_{k=0}^{q}\frac{1}{n^{k}}\sum_{s=0}^{k}\frac{f^{\left( k+s\right)
}\left( x\right) }{s!}\binom{k-j}{k-s}x^{s}+o\left( n^{-q}\right) \text{ }%
\qquad \left( n\rightarrow \infty \right) .
\end{eqnarray*}%
Interchanging the order of summation, we obtain 
\begin{equation*}
\left( S_{n,j}f\right) \left( x\right) =\sum_{k=0}^{q}\frac{1}{n^{k}}%
\sum_{s=k}^{2k}\frac{f^{\left( s\right) }\left( x\right) }{\left( s-k\right)
!}\binom{k-j}{2k-s}x^{s-k}+o\left( n^{-q}\right) \text{ }\qquad \left(
n\rightarrow \infty \right) .
\end{equation*}%
Observing that 
\begin{align*}
&\sum_{s=k}^{2k}\frac{f^{\left( s\right) }\left( x\right) }{\left(
s-k\right) !}\binom{k-j}{2k-s}x^{s-k} =\sum_{s=0}^{k}\frac{f^{\left(
k+s\right) }\left( x\right) }{s!}\binom{k-j}{k-s}x^{s} \\
&=\frac{x^{j}}{k!}\sum_{s=0}^{k}\binom{k}{s}f^{\left( k+s\right) }\left(
x\right) \left( x^{k-j}\right) ^{\left( k-s\right) }=\frac{x^{j}}{k!}\left(
x^{k-j}f^{\left( k\right) }\left( x\right) \right) ^{\left( k\right) },
\end{align*}%
where the last equation follows from Leibniz rule for differentiation, we
obtain 
\begin{equation*}
\left( S_{n,j}f\right) \left( x\right) =x^{j}\sum_{k=0}^{q}\frac{1}{k!n^{k}}%
\left( x^{k-j}f^{\left( k\right) }\left( x\right) \right) ^{\left( k\right)
}+o\left( n^{-q}\right) \text{ }\qquad \left( n\rightarrow \infty \right) .
\end{equation*}%
Using the notation from Eq. $\left( \ref{eq.X3}\right) $ we get the result
in the desired form, which completes the proof.
\end{proof}

Now, we study simultaneous approximation by the operators $S_{n,j}$. It
turns out that $\left( S_{n,j}f\right) ^{\left( m\right) }\left( x\right) $
possess as asymptotic expansion which arises from the asymptotic expansion
for $\left( S_{n,j}f\right) \left( x\right) $ by differentiating its
coefficient term-by-term with respect to the variable $x$.

\begin{theorem}
Let $x>0$ and $q,m$ non-negative integers. If the function $f\in E$ admits
the derivative $f^{\left( 2q+2m\right) }\left( x\right) $, then the
derivatives of $S_{n,j}f$ possess the asymptotic expansion 
\begin{equation}
\left( S_{n,j}f\right) ^{\left( m\right) }\left( x\right)
=\sum_{k=0}^{q}c_{k,j}^{\left( m\right) }\left( f,x\right) n^{-k}+o\left(
n^{-q}\right) \text{ \qquad }\left( n\rightarrow \infty \right) ,
\label{Asymptotic-expansion-simultaneous}
\end{equation}%
where 
\begin{equation*}
c_{k,j}\left( f,x\right) =\frac{1}{k!}\left( {\mathcal{D}}_{j}^{2k}f\right)
\left( x\right) =\frac{1}{k!}x^{j}\left( x^{k-j}f^{\left( k\right) }\left(
x\right) \right) ^{\left( k\right) }.
\end{equation*}
In a more explicit form it holds 
\begin{equation}
\left( S_{n,j}f\right) ^{\left( m\right) }\left( x\right) =\sum_{k=0}^{q}%
\frac{1}{n^{k}}\sum_{i=0}^{k}\frac{1}{i!}\binom{k+m-j}{k-i}x^{i}f^{\left(
k+m+i\right) }\left( x\right) +o\left( n^{-q}\right)
\label{Asymptotic-expansion-simultaneous-explicit}
\end{equation}%
as $n\rightarrow \infty $.
\end{theorem}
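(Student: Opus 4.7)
The plan is to reduce the expansion for $(S_{n,j}f)^{(m)}(x)$ to Theorem~\ref{theorem-asymptotic expansion} with shifted index $j-m$ applied to $f^{(m)}$, via a differentiation recurrence for the sequence $S_{n,j}$ itself. The first step is to establish the recurrence
$$(S_{n,j}f)'(x) = (S_{n,j-1}f')(x) - f'(0)\sum_{k=0}^{j-2}s_{n,k}(x) \qquad (j\in\mathbb{Z}),$$
where, by the convention of the paper, the last sum is empty for $j\leq 1$. This is derived by differentiating the defining series via $s_{n,k}'(x)=n[s_{n,k-1}(x)-s_{n,k}(x)]$, performing one index shift to reassemble $S_{n,j-1}$, and integrating by parts in the inner Durrmeyer integral; the boundary terms at $t=\infty$ vanish thanks to $n>A$, while those at $t=0$ combine with the initial $f(0)$-contribution to leave only the advertised $f'(0)$ residue. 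Iterating $m$ times yields
$$(S_{n,j}f)^{(m)}(x) = (S_{n,j-m}f^{(m)})(x) + R_{n,j,m}(f,x),$$
where $R_{n,j,m}$ is a finite linear combination of terms of the form $c\cdot s_{n,\ell}^{(\nu)}(x)f^{(i)}(0)$ with $i<m$ and $\ell,\nu\geq 0$, each of which is $O(e^{-cn})$ at fixed $x>0$.

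Next I would handle localization. The $m$-th derivative of $S_{n,j}g$ with respect to $x$ can be computed term-by-term using $s_{n,k}^{(m)}(x)=n^m\sum_{i=0}^m(-1)^{m-i}\binom{m}{i}s_{n,k-i}(x)$, and each resulting summand reduces (after an index shift) to an expression controlled by $S_{n,j'}g$ at $x$ (multiplied by $n^m$). Consequently, Theorem~\ref{theorem-localization} applied piecewise yields $(S_{n,j}g)^{(m)}(x)=O(e^{-cn})$ whenever $g\in E$ vanishes in a neighbourhood of $x$. Choosing a $C^\infty$ bump function $\varphi$ with $\varphi\equiv 1$ near $x$ and $\mathrm{supp}\,\varphi$ contained in the neighbourhood on which $f$ is sufficiently smooth, and replacing $f$ by $\widetilde{f}:=\varphi f$, we may assume $f$ is smooth and compactly supported, so that $f^{(m)}\in E$ and the recurrence applies rigorously; the error introduced by this replacement is $O(e^{-cn})$.

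Applying Theorem~\ref{theorem-asymptotic expansion} with parameters $(j-m,\,f^{(m)})$ now gives
$$(S_{n,j-m}f^{(m)})(x) = \sum_{k=0}^q\frac{1}{k!\,n^k}(\mathcal{D}_{j-m}^{2k}f^{(m)})(x) + o(n^{-q}).$$
The commutation identity $D\mathcal{D}_j^{2}=\mathcal{D}_{j-1}^{2}D$, which follows from $\mathcal{D}_j^2=(1-j)D+e_1D^2$ by a direct calculation, iterates to $D^m\mathcal{D}_j^{2k}=\mathcal{D}_{j-m}^{2k}D^m$, so $\mathcal{D}_{j-m}^{2k}f^{(m)}=D^m\mathcal{D}_j^{2k}f=k!\,c_{k,j}^{(m)}(f,x)$, which proves~(\ref{Asymptotic-expansion-simultaneous}). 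The explicit form (\ref{Asymptotic-expansion-simultaneous-explicit}) then follows from the Leibniz rule applied to
$\mathcal{D}_{j-m}^{2k}f^{(m)}(x)=x^{j-m}\bigl(x^{k-j+m}f^{(k+m)}(x)\bigr)^{(k)}$
together with the identity $\binom{k}{i}(k+m-j)^{\underline{k-i}}/k!=\binom{k+m-j}{k-i}/i!$.

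The main obstacle will be the clean derivation of the one-step recurrence for $(S_{n,j}f)'$: the cases $j\geq 1$ and $j\leq 0$ differ in whether the initial $f(0)$-summation is present, and the bookkeeping that makes the cancellations between the telescoping derivative $s_{n,k}'=n(s_{n,k-1}-s_{n,k})$ and the boundary values produced by the integration by parts collapse onto a single $f'(0)$-term (multiplied by a shifted block of basis functions $s_{n,k}$) must be organised with care.
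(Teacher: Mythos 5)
Your proposal is correct, and it takes a genuinely different route from the paper. The paper never moves the derivative onto $f$: it uses the identity $s_{n,k}^{\left( m\right) }\left( x\right) =n^{m}\sum_{\ell =0}^{m}\left( -1\right) ^{m-\ell }\binom{m}{\ell }s_{n,k-\ell }\left( x\right) $ to write $\left( S_{n,j}f\right) ^{\left( m\right) }\left( x\right) =n^{m}\sum_{\ell =0}^{m}\left( -1\right) ^{m-\ell }\binom{m}{\ell }\left( S_{n,j-\ell }f\right) \left( x\right) $, i.e.\ an $m$-th finite difference in the parameter $j$ of the undifferentiated operators applied to $f$ itself; it then expands each $\left( S_{n,j-\ell }f\right) \left( x\right) $ to order $q+m$ (needed because of the prefactor $n^{m}$), shows that the alternating sum of the coefficients $\mathcal{D}_{j-\ell }^{2k}f$ vanishes for $k<m$ so that the $n^{m}$ is absorbed, and finally matches the result with $c_{k,j}^{\left( m\right) }$ through a rather long chain of three Vandermonde convolutions. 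Your route instead commutes one derivative through the operator by integration by parts, $\left( S_{n,j}f\right) ^{\prime }=S_{n,j-1}f^{\prime }+\left( \text{exponentially small residue}\right) $, iterates to $\left( S_{n,j}f\right) ^{\left( m\right) }=S_{n,j-m}f^{\left( m\right) }+O\left( e^{-cn}\right) $, and then identifies the coefficients via the clean commutation $D\mathcal{D}_{j}^{2}=\mathcal{D}_{j-1}^{2}D$, hence $D^{m}\mathcal{D}_{j}^{2k}=\mathcal{D}_{j-m}^{2k}D^{m}$; I checked both the one-step recurrence (the boundary terms at $t=0$ do collapse onto the single $f^{\prime }\left( 0\right) $ block) and the commutation identity, and they are right, as is the Leibniz computation yielding the explicit form. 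What your approach buys is the elimination of both the $n^{m}$ amplification/cancellation step and the combinatorial coefficient matching; what it costs is that the integration by parts forces you to do the cutoff \emph{before} the recurrence so that $f$ is absolutely continuous on $[0,\infty )$, and to justify term-by-term differentiation of the series. Two small imprecisions worth fixing: $\widetilde{f}=\varphi f$ is only $C^{2q+2m-1}$ near $x$ (not $C^{\infty }$, since the hypothesis gives $f^{\left( 2q+2m\right) }\left( x\right) $ only at the point), which is however enough since you only need $\left( \widetilde{f}^{\left( m\right) }\right) ^{\left( 2q\right) }\left( x\right) =f^{\left( 2q+m\right) }\left( x\right) $ to exist; and the case $m=0$ should be excluded from the recurrence step (it is just Theorem~\ref{theorem-asymptotic expansion}).
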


For the classical Sz\'{a}sz--Mirakjan--Durrmeyer operators, i.e., for $j=0$,
this result can be found in \cite[Corollary~2.4]{Abel-BDV-Ivan-Maratea-2004}%
.\ 

\begin{proof}
Let $f\in E$. Using 
\begin{equation*}
s_{n,k}^{\left( m\right) }\left( x\right) =\left( \frac{d}{dx}\right)
^{m}\left( e^{-nx}\frac{\left( nx\right) ^{k}}{k!}\right) =n^{m}\sum_{\ell
=0}^{m}\left( -1\right) ^{m-\ell }\binom{m}{\ell }s_{n,k-\ell }\left(
x\right)
\end{equation*}%
it follows that 
\begin{equation*}
\left( S_{n,j}f\right) ^{\left( m\right) }\left( x\right) =n^{m}\sum_{\ell
=0}^{m}\left( -1\right) ^{m-\ell }\binom{m}{\ell }\left( S_{n,j-\ell
}f\right) \left( x\right) .
\end{equation*}%
If $f^{\left( 2q+2m\right) }\left( x\right) $ exists, Theorem~\ref%
{theorem-asymptotic expansion} yields the asymptotic expansion 
\begin{equation*}
\left( S_{n,j}f\right) \left( x\right) =f\left( x\right) +\sum_{k=1}^{q+m}%
\frac{1}{k!n^{k}}\left( {\mathcal{D}}_{j}^{2k}f\right) \left( x\right)
+o\left( n^{-q-m}\right) \text{ \qquad }\left( n\rightarrow \infty \right) .
\end{equation*}%
Hence, we obtain 
\begin{align*}
\left( S_{n,j}f\right) ^{\left( m\right) }\left( x\right) &=n^{m}\sum_{\ell
=0}^{m}\left( -1\right) ^{m-\ell }\binom{m}{\ell }\left( \sum_{k=0}^{q+m}%
\frac{1}{k!n^{k}}\left( {\mathcal{D}}_{j-\ell }^{2k}f\right) \left( x\right)
+o\left( n^{-q-m}\right) \right) \\
&=\sum_{k=0}^{q+m}\frac{1}{k!n^{k-m}}\sum_{\ell =0}^{m}\left( -1\right)
^{m-\ell }\binom{m}{\ell }\left( {\mathcal{D}}_{j-\ell }^{2k}f\right) \left(
x\right) +o\left( n^{-q}\right) \,\,\left( n\rightarrow \infty \right) .
\end{align*}%
We have 
\begin{eqnarray*}
\sum_{\ell =0}^{m}\left( -1\right) ^{m-\ell }\binom{m}{\ell }\left( {%
\mathcal{D}}_{j-\ell }^{2k}f\right) \left( x\right) &=&\sum_{\ell
=0}^{m}\left( -1\right) ^{m-\ell }\binom{m}{\ell }x^{j-\ell }\left(
x^{k-j+\ell }f^{\left( k\right) }\left( x\right) \right) ^{\left( k\right) }
\\
&=&x^{j}\left( \left( \frac{x}{t}-1\right) ^{m}x^{k-j}f^{\left( k\right)
}\left( x\right) \right) ^{\left( k\right) }\mid _{t=x} \\
&=&x^{j}\binom{k}{m}m!x^{-m}\left( x^{k-j}f^{\left( k\right) }\left(
x\right) \right) ^{\left( k-m\right) },
\end{eqnarray*}%
which vanishes, for $0\leq k<m$. Hence, 
\begin{equation*}
\left( S_{n,j}f\right) ^{\left( m\right) }\left( x\right) =\sum_{k=0}^{q}%
\frac{1}{k!n^{k}}x^{j-m}\left( x^{k+m-j}f^{\left( k+m\right) }\left(
x\right) \right) ^{\left( k\right) }+o\left( n^{-q}\right) \,\,\left(
n\rightarrow \infty \right) .
\end{equation*}%
Application of the Leibniz rule yields 
\begin{equation*}
\left( S_{n,j}f\right) ^{\left( m\right) }\left( x\right) =\sum_{k=0}^{q}%
\frac{1}{k!n^{k}}\sum_{i=0}^{k}\binom{k}{i}\left( k+m-j\right) ^{\underline{%
k-i}}x^{i}f^{\left( k+m+i\right) }\left( x\right) +o\left( n^{-q}\right)
\,\,\left( n\rightarrow \infty \right) ,
\end{equation*}%
which proves $\left( \ref{Asymptotic-expansion-simultaneous-explicit}\right) 
$. Now we calculate 
\begin{align*}
k!c_{k,j}^{\left( m\right) }\left( f,x\right) &=\left( x^{j}\left(
x^{k-j}f^{\left( k\right) }\left( x\right) \right) ^{\left( k\right)
}\right) ^{\left( m\right) } \\
&=\sum_{\ell =0}^{m}\binom{m}{\ell }j^{\underline{m-\ell }}x^{j-m+\ell
}\left( x^{k-j}f^{\left( k\right) }\left( x\right) \right) ^{\left( k+\ell
\right) } \\
&=\sum_{\ell =0}^{m}\binom{m}{\ell }j^{\underline{m-\ell }}x^{j-m+\ell
}\sum_{i=0}^{k+\ell }\binom{k+\ell }{i}\left( k-j\right) ^{\underline{k+\ell
-i}}x^{-\ell +i-j}f^{\left( k+i\right) }\left( x\right) \\
&=\sum_{i=0}^{k+m}x^{i-m}f^{\left( k+i\right) }\left( x\right) d\left(
k,j,m,i\right) ,
\end{align*}%
where 
\begin{equation*}
d\left( k,j,m,i\right) =\sum_{\ell =0}^{m}\binom{m}{\ell }\binom{k+\ell }{i}%
j^{\underline{m-\ell }}\left( k-j\right) ^{\underline{k+\ell -i}}.
\end{equation*}%
Application of Vandermonde convolution 
\begin{equation*}
\binom{k+\ell }{i}=\sum_{p=0}^{i}\binom{k}{i-p}\binom{\ell }{p}
\end{equation*}%
and the binomial identity $\binom{m}{\ell }\binom{\ell }{p}=\binom{m}{p}%
\binom{m-p}{\ell -p}$ leads to 
\begin{eqnarray*}
d\left( k,j,m,i\right) &=&\sum_{p=0}^{i}\binom{k}{i-p}\binom{m}{p}\sum_{\ell
=p}^{m}\binom{m-p}{\ell -p}j^{\underline{m-\ell }}\left( k-j\right) ^{%
\underline{k+\ell -i}} \\
&=&\sum_{p=0}^{i}\binom{k}{i-p}\binom{m}{p}\sum_{\ell =0}^{m-p}\binom{m-p}{%
\ell }j^{\underline{m-p-\ell }}\left( k-j\right) ^{\underline{k+\ell +p-i}}.
\end{eqnarray*}%
A second application of Vandermonde convolution shows that the inner sum is
equal to 
\begin{equation*}
\left( k-j\right) ^{\underline{k+p-i}}\sum_{\ell =0}^{m-p}\binom{m-p}{\ell }%
j^{\underline{m-p-\ell }}\left( i-p-j\right) ^{\underline{\ell }}=\left(
k-j\right) ^{\underline{k+p-i}}\left( i-p\right) ^{\underline{m-p}}.
\end{equation*}%
By a third application of Vandermonde convolution 
\begin{equation*}
\left( k-j\right) ^{\underline{k+p-i}}=\sum_{\mu =0}^{k+p-i}\binom{k+p-i}{%
\mu }\left( k+m-j\right) ^{\underline{q}}\left( -m\right) ^{\underline{%
k+p-i-q}}
\end{equation*}%
and reversing the summation order of the first sum we obtain 
\begin{equation*}
d\left( k,j,m,i\right) =\sum_{p=0}^{i}\binom{k}{p}\binom{m}{i-p}p^{%
\underline{m-i+p}}\sum_{\mu =0}^{k-p}\binom{k-p}{\mu }\left( k+m-j\right) ^{%
\underline{q}}\left( -m\right) ^{\underline{k-p-q}}.
\end{equation*}%
Observing that $p^{\underline{m-i+p}}=0$ if $i<m$, we infer that 
\begin{equation}
k!c_{k,j}^{\left( m\right) }\left( f,x\right) =\sum_{i=0}^{k}x^{i}f^{\left(
k+m+i\right) }\left( x\right) d\left( k,j,m,i+m\right) ,
\label{intermediate-formula-for-koeff-derivatives}
\end{equation}%
with 
\begin{equation*}
d\left( k,j,m,i+m\right)=
\end{equation*}
\begin{equation*}
\sum_{\mu =0}^{k}\left( k+m-j\right) ^{\underline{\mu }}\sum_{p=0}^{i+m}%
\binom{k}{p}\binom{k-p}{\mu }\binom{m}{i+m-p}p^{\underline{p-i}}\left(
-m\right) ^{\underline{k-p-\mu }}.
\end{equation*}%
Since $\binom{m}{i+m-p}=0$, if $i>p$, and $\binom{m}{i+m-p}=\binom{m}{p-i}$,
for $i\leq p$, we obtain 
\begin{equation*}
d\left( k,j,m,i+m\right)=
\end{equation*}
\begin{equation*}
\sum_{\mu =0}^{k}\left( k+m-j\right) ^{\underline{\mu }}\sum_{p=0}^{m}\binom{%
k}{p+i}\binom{k-i-p}{\mu }\binom{m}{p}\left( p+i\right) ^{\underline{p}%
}\left( -m\right) ^{\underline{k-i-p-\mu }}.
\end{equation*}%
Using $\binom{k}{p+i}\left( p+i\right) ^{\underline{p}}i!=k^{\underline{p+i}%
}=k^{\underline{i}}\left( k-i\right) ^{\underline{p}}$ this equation can be
rewritten in the form 
\begin{equation*}
d\left( k,j,m,i+m\right)=
\end{equation*}
\begin{equation*}
\sum_{\mu =0}^{k}\left( k+m-j\right) ^{\underline{\mu }}\binom{k}{i}%
\sum_{p=0}^{k-i}\binom{k-i}{p}\binom{k-i-p}{\mu }\binom{m}{p}\left(
-m\right) ^{\underline{k-i-p-\mu }}.  \label{formula-for-d(k,j,m,i+m)}
\end{equation*}%
Because $\binom{k-i-p}{\mu }=0$, if $\mu >k-i-p$, the inner sum of $\left( %
\ref{formula-for-d(k,j,m,i+m)}\right) $ vanishes if $k-i<\mu \leq k$. In the
case $0\leq \mu <k-i$, we use $p!\binom{m}{p}\left( -m\right) ^{\underline{%
k-i-p-\mu }}=\left( -1\right) ^{k-i-p-\mu }m\left( m+k-i-p-\mu -1\right) ^{%
\underline{k-i-\mu -1}}$ to see that the inner sum of $\left( \ref%
{formula-for-d(k,j,m,i+m)}\right) $ is equal to 
\begin{equation*}
\sum_{p=0}^{k-i}\binom{k-i}{p}\binom{k-i-p}{\mu }m\left( m+k-i-p-\mu
-1\right) ^{\underline{k-i-\mu -1}}.
\end{equation*}%
Since $\binom{k-i-p}{\mu }m\left( m+k-i-p-\mu -1\right) ^{\underline{k-i-\mu
-1}}$ is a polynomial of degree $k-i-1$ in the variable $p$, this sum
vanishes. Finally, if $\mu =k-i$, the inner sum of $\left( \ref%
{formula-for-d(k,j,m,i+m)}\right) $ has the value 
\begin{equation*}
\sum_{p=0}^{k-i}\binom{k-i}{p}\binom{k-i-p}{k-i}\binom{m}{p}\left( -m\right)
^{\underline{-p}}=1.
\end{equation*}%
Summarizing, we obtain 
\begin{equation*}
d\left( k,j,m,i+m\right) =\left( k+m-j\right) ^{\underline{k-i}}\binom{k}{i}.
\end{equation*}%
By $\left( \ref{intermediate-formula-for-koeff-derivatives}\right) $, we get 
\begin{equation*}
c_{k,j}^{\left( m\right) }\left( f,x\right) =\frac{1}{k!}\sum_{i=0}^{k}%
\binom{k}{i}\left( k+m-j\right) ^{\underline{k-i}}x^{i}f^{\left(
k+m+i\right) }\left( x\right) ,
\end{equation*}%
and comparison with $\left( \ref{Asymptotic-expansion-simultaneous-explicit}%
\right) $ proves $\left( \ref{Asymptotic-expansion-simultaneous}\right) $.
\end{proof}

$  $

\noindent\textbf{Acknowledgement.} The second author was supported by Lucian Blaga
University of Sibiu (Knowledge Transfer Center) $\&$ Hasso Plattner
Foundation research grants LBUS-HPI-ERG-2023-01.

\end{document}